\providecommand{\U}[1]{\protect\rule{.1in}{.1in}}
\providecommand{\U}[1]{\protect\rule{.1in}{.1in}}
\providecommand{\U}[1]{\protect\rule{.1in}{.1in}}
\providecommand{\U}[1]{\protect\rule{.1in}{.1in}}
\providecommand{\U}[1]{\protect\rule{.1in}{.1in}}
\newcommand{\ulambda}{{\boldsymbol{\lambda}}}
\newcommand{\umu}{{\boldsymbol{\mu}}}
\newcommand{\uemptyset }{{\boldsymbol{\emptyset}}}
\newtheorem{Th}{Theorem}[section]
\numberwithin{equation}{section}
\newtheorem{Prop}[Th]{Proposition}
\newtheorem{conj}[Th]{Conjecture}
\theoremstyle{remark}
\newtheorem{Rem}[Th]{Remark}{\rmfamily}
\theoremstyle{definition}
\newtheorem{Def}[Th]{Definition}{\rmfamily}
\newtheorem{exa}[Th]{Example}{\rmfamily}
\newcommand\blfootnote[1]{%
  \begingroup
  \renewcommand\thefootnote{}\footnote{#1}%
  \addtocounter{footnote}{-1}%
  \endgroup
}
\newtheorem{abs}[Th]{\bfseries}
\begin{document}

\title{Crystal isomorphisms and Mullineux involution II}

\author{Nicolas Jacon}
\address{Universit\'{e} de Reims Champagne-Ardennes, UFR Sciences exactes et
naturelles. Laboratoire de Math\'{e}matiques UMR CNRS 9008. Moulin de la Housse BP
1039. 51100 Reims. France.}
\email{nicolas.jacon@univ-reims.fr} 
\author{C\'edric Lecouvey}
\address{Institut Denis Poisson CNRS UMR 7013
Universit\'{e} de Tours
Parc de Grandmont
37200 Tours, France.}
\email{cedric.lecouvey@lmpt.univ-tours.fr} 

\maketitle
\date{}
\blfootnote{\textup{2020} \textit{Mathematics Subject Classification}: \textup{20C08,05E10,20C20}} 
\begin{abstract}
We present a new combinatorial and conjectural algorithm for computing the Mullineux involution for the symmetric group 
 and its  Hecke algebra. This algorithm is built on a conjectural property of crystal isomorphisms which can be rephrased in a purely combinatorial way. 
\end{abstract}


\section{Introduction}

The Mullineux involution is an important map  which has been originally defined by Mullineux \cite{Mu} in the context of the modular representation theory of the symmetric group. More generally, it can be defined  for the class of Hecke algebras of the symmetric group \cite{Br}. Let $n\in \mathbb{Z}_{>0}$ and  $e\in \mathbb{Z}_{>1}$. Let $\eta$ be a primitive $e$ root of $1$.  The Hecke algebra of the symmetric group $\mathcal{H}_n (\eta)$ is defined as the associative unital $\mathbb{C}$-algebra with generators $T_1$, \ldots, $T_{n-1}$ and the following relations:
$$\begin{array}{rcll}
(T_i-\eta)(T_i+1)&=&0& \text{for }i=1,\ldots,n-1,\\
T_i T_{i+1} T_i &=& T_{i+1} T_i T_{i+1} &  \text{for }i=1,\ldots,n-2,\\
T_i T_j&=&T_j T_i & \text{if }|i-j|>1.\\
\end{array}$$
It is known that the simple modules of this algebra are naturally labelled by the set of $e$-regular partitions $\text{Reg}_e (n)$  with rank $n$ (see \S \ref{par}  for the definition):
 $$\operatorname{Irr} (  \mathcal{H}_n (\eta))=\{ D^{\lambda} \ |\ \lambda \in \operatorname{Reg}_e (n)\}.$$
 There is a $\mathbb{C}$-algebra automorphism $\sharp$ which can be defined on the generators of  $\mathcal{H}_n (\eta)$ as follows. For all $i=1,\ldots,n-1$, we have $T_i^{\sharp}=-\eta T_i$. This automorphism induces an involution:
 $$m_e : \operatorname{Reg}_e (n) \to \operatorname{Reg}_e (n),$$
 defined as follows. For all $\lambda \in  \operatorname{Reg}_e (n)$ there exists a unique $\mu \in  \operatorname{Reg}_e (n)$ such that the module $D^{\lambda}$ twisted by $\sharp$ is isomorphic to $D^{\mu}$. Then we define $m_e (\lambda):=\mu$.  If $e$ is prime, this involution   describes the structure of  a simple $\mathbb{F}_e\mathfrak{S}_n$ -module   twisted by the sign representation.  If $e$ is sufficiently large, or more generally if $\lambda$ is an $e$-core, it is easy to see that $m_e (\lambda)$ is just the conjugate partition $\lambda'$. 
 
 The study of the Mullineux involution has a long story. A first conjectural and combinatorial description of $m_e$ (if $e$ is prime) was first given by Mullineux \cite{Mu}  and proved later by Ford and Kleshchev \cite{FK}.  Before this proof, Kleshchev gave a solution to the computation of the involution \cite{K} (see also  \cite{BO}  and \cite{BK}). This solution may be rephrased in terms of the crystal graph theory.  Other algorithms  were given by Xu \cite{Xu1,Xu2}, or more recently by  Fayers \cite{F}, and by the author \cite{JMu}. We also note that there exist different generalizations in the context of Ariki-Koike algebras \cite{F2,JL0}, affine Hecke algebras \cite{MW,JL3},  general linear groups \cite{DJ} or  rational Cherednik algebras \cite{Lo,GJN}  and they are all connected with the above one.  We also mention a recent conjecture by Bezrukavnikov on this involution in relation with  nabla operators and Haiman's $n!$ conjecture studied in \cite{DiY}. 
 
All the above algorithms for computing the Mullineux involution have a common feature: they are recursive algorithms in $n$. The algorithms to compute the Mullineux image of a partition $\lambda$ of rank $n$ requires the computation of the Mullineux involution $m_e (\mu)$ for $|\mu|<n$. 
 The aim of this paper is to present a conjectural algorithm which is  recursive in $e$. This conjecture is in fact  built on the description of the Mullineux involution by Kleshchev in terms of crystal graphs together with the concept of crystal isomorphisms described in \cite{JL}. 
 The conjecture follows in fact from a  purely combinatorial conjecture which can be described without any mention to crystals and in a very simple way. Assuming the conjecture true, it becomes possible to compute 
  $m_e$ from the datum of $m_{2e}$. As $m_e$ corresponds to the conjugation of partitions if $e$ is sufficiently large, the algorithm follows. 
 
 The paper is organized as follows.  We first recall several elementary combinatorial notions on partitions and crystals. This section ends with  a presentation of the  Kleshchev' solution to the Mullineux problem. The second section explains the notion of crystal isomorphism. We then give a conjectural combinatorial property, Conjecture \ref{conj}, which can be rephrased in the context of crystal isomorphisms. The last section presents several new results around this notion and states the conjectural algorithm for computing the Mullineux involution. \\
\\
{\bf  Acknowledgements}: The authors are grateful to Matt Fayers for useful discussions. The first author is supported by ANR project  AHA ANR-18-CE40-0001. Both authors are supported by  ANR project CORTIPOM  ANR-21-CE40-0019.

\section{Mullineux involution for Hecke algebras}
We first start with the definition of several elementary notions. Then we present the Kleshchev solution to the computation of the Mullineux involution. 

\subsection{Partitions and Young diagrams}\label{par} 
A {\it partition} is a non increasing
sequence $\lambda=(\lambda_{1},\cdots,\lambda_{m})$ of nonnegative
integers.  The {\it rank}  of the partition is by  definition the number $|\lambda|=\sum_{1\leq i\leq m} \lambda_i$. 
 We say that $\lambda$ is a partition of $n$, where $n=|\lambda|$. The unique partition of $0$ is the {\it empty partition} $\emptyset$. We denote by $\Pi (n)$ the set of partitions of $n$.  For $e\in \mathbb{Z}_{>1}$, we say that 
 $\lambda$ is an {\it $e$-regular partition} if  no non zero part of $\lambda$ can be repeated $e$ or more times. The set of $e$-regular partitions of rank $n$  is denoted by $\text{Reg}_e (n)$. 
Given a partition $\lambda\in \Pi (n)$, its \emph{Young diagram} $[\lambda]$ is the set: 
$$[\lambda] = \big\{(a,b)\ |\ 1\leq a\leq r,\ 1 \leq b\leq \lambda_a\big\} \subset \mathbb{N}\times\mathbb{N}.$$
The elements of this set are called the {\it nodes} of $\lambda$. The {\it $e$-residue} (ore more simply, residue) of a node $\gamma\in [\lambda]$ is by definition  $\operatorname{res}  (\gamma)=b-a+e\mathbb{Z}$. For $j\in \mathbb{Z}/e\mathbb{Z}$, we say that $\gamma$ is a {\it $j$-node} if $\operatorname{res} (\gamma)=j$. In addition,  $\gamma$ is called a {\it removable $j$-node} for $\lambda$ if the set $[\lambda]\setminus\{\gamma\} $ is the Young diagram of some partition $\mu$. In this case, we also say that $\gamma$ is an {\it addable $j$-node} for $\mu$. \smallskip

Let $\gamma=(a,b)$ and $\gamma'=(a',b')$ be two addable or removable $j$-nodes of the same partition $\lambda$. Then we write  $\gamma>\gamma'$ if $a<a'$. Let  $w_j (\lambda)$ be the word obtained by reading all the addable and removable $j$-nodes in increasing order and by encoding each addable $j$-node with the letter $A$ and each removable $j$-node with the letter $R$.  Then deleting as many subwords  $RA$ in this word as possible, we obtain a new word  $\widetilde{w}_j (\lambda)=A\cdots A R \cdots R$. The node corresponding to the rightmost $A$ (if it exists) is called the {\it good addable  $j$-node} and the node corresponding to the leftmost $R$ (if it exists) is called the {\it good removable $j$-node}.

\subsection{Level $1$ Fock space}\label{l1}
Let $\mathcal{F}$ be the $\mathbb{C}$-vector space with basis given by all the
   partitions. It is called the (level $1$) {\it Fock space}. There is an action of  $\mathcal{U} (\widehat{\mathfrak{sl}}_e)$ on $\mathcal{F}$  which makes $\mathcal{F}$ into an integrable module of level $1$.
For $i\in \mathbb{Z}$, the Kashiwara operators $\widetilde{e}_{i+e\mathbb{Z},e}$ and $\widetilde{f}_{i+e\mathbb{Z},e}$ are then defined as follows.
\begin{itemize}
\item If $\lambda$ has no addable $i$-node then $\widetilde{f}_{i+e\mathbb{Z},e} \cdot\lambda=0$.
\item if $\lambda$ has a good addable $i$-node $\gamma$ then  $\widetilde{f}_{i+e\mathbb{Z},e} \cdot\lambda=\mu$ where $[\mu]=[\lambda]\sqcup \{\gamma\}$.
\item If $\lambda$ has no removable  $i$-node then $\widetilde{e}_{i+e\mathbb{Z},e} \cdot\lambda=0$.
\item if $\lambda$ has a good removable $i$-node $\gamma$ then  $\widetilde{e}_{i+e\mathbb{Z},e} \cdot\lambda=\mu$ where $[\mu]=[\lambda]\setminus \{\gamma\}$.
\end{itemize}
Using these operators one can construct the $\widehat{\mathfrak{sl}}_e$-{\it crystal graph} of $\mathcal{F}$, which is the graph with 
\begin{itemize}
 \item vertices: all the partitions $\lambda$ of $n\in \mathbb{N}$,
 \item arrows: there is an arrow from $\lambda$ to $\mu$ colored by $i\in \mathbb{Z}/e\mathbb{Z}$ 
  if and only if  $\widetilde{f}_{i+e\mathbb{Z},e} \cdot \lambda=\mu$, or equivalently if and only if
  $\lambda=\widetilde{e}_{i+e\mathbb{Z},e} \cdot \mu$.
\end{itemize} 
Note that the definition makes sense for $e=\infty$. The corresponding graph, ${\mathfrak{sl}}_\infty$-crystal graph, coincides with the Young graph, which describes   the branching graph of the complex irreducible representations of symmetric groups.

\subsection{Mullineux involution} 
We can first give an interpretation of the set of $e$-regular partitions using Kashiwara operators. The following result can be found for example in \cite[\S2.2]{LLT}.
\begin{Prop}\label{reg}
 A partition $\lambda$ is an $e$-regular partition of $n$  if and only if there exists:
 $(i_1,\ldots,i_n)\in \mathbb{Z}^n$  such that:
 $$\widetilde{f}_{i_1+e\mathbb{Z},e}\cdots \widetilde{f}_{i_n+e\mathbb{Z},e}\cdot \emptyset=\lambda.$$
\end{Prop}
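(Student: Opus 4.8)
The plan is to prove both implications of Proposition \ref{reg} by induction on $n$, using the structure of the $\widehat{\mathfrak{sl}}_e$-crystal graph of $\mathcal{F}$ restricted to its connected component containing the empty partition. The key point is that the set of partitions reachable from $\emptyset$ by sequences of Kashiwara operators $\widetilde{f}_{i+e\mathbb{Z},e}$ is exactly the vertex set of this connected component, and one shows this component consists precisely of the $e$-regular partitions.

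For the implication that every reachable partition is $e$-regular, I would argue by contradiction using the combinatorial definition of the Kashiwara operators. Suppose $\mu = \widetilde{f}_{i+e\mathbb{Z},e}\cdot\lambda$ with $\lambda$ $e$-regular but $\mu$ not $e$-regular; then $\mu$ has some part repeated at least $e$ times, and one analyzes the column where the added good node $\gamma$ sits. The node $\gamma$ is added as a good addable $i$-node, meaning it corresponds to the rightmost $A$ after cancelling all $RA$ subwords in $w_i(\lambda)$. The combinatorial obstruction is that if adding $\gamma$ creates an $e$-fold repetition, then among the addable/removable $i$-nodes lying below $\gamma$ there is a configuration forcing $\gamma$ to be cancelled in $\widetilde{w}_i(\lambda)$ (its $A$ would be matched by an $R$ to its right), contradicting that it is the good addable node. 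This local analysis of residues along a column — the key step and the main obstacle — is exactly the classical argument (see \cite{LLT}), so I would either reproduce it carefully or cite it.

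For the converse, that every $e$-regular partition is reachable, I would again induct on $n$: given $\lambda \in \operatorname{Reg}_e(n)$ with $n>0$, it suffices to produce a residue $i$ such that $\lambda$ has a good removable $i$-node $\gamma$ with $\mu := \widetilde{e}_{i+e\mathbb{Z},e}\cdot\lambda$ still $e$-regular; then $\mu \in \operatorname{Reg}_e(n-1)$ is reachable from $\emptyset$ by the inductive hypothesis, and appending $\widetilde{f}_{i+e\mathbb{Z},e}$ to that sequence yields $\lambda$. To find such an $i$, one takes a removable node on the bottom of a suitable column — concretely, choosing the node at the end of the last row of some maximal block of equal parts — and checks via the $RA$-cancellation procedure that it is in fact the good removable node for its residue, and that removing it preserves $e$-regularity. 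That $\widetilde{e}_{i+e\mathbb{Z},e}\cdot\lambda \neq 0$ for at least one $i$ whenever $\lambda \neq \emptyset$ follows because a nonempty $e$-regular partition always has at least one good removable node.

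Finally I would assemble the two directions: the forward direction gives $\{\widetilde{f}_{i_1+e\mathbb{Z},e}\cdots\widetilde{f}_{i_n+e\mathbb{Z},e}\cdot\emptyset\} \subseteq \operatorname{Reg}_e(n) \cup \{0\}$, and the converse gives the reverse inclusion, so the partitions obtained this way are exactly $\operatorname{Reg}_e(n)$. The main obstacle throughout is the column-by-column residue bookkeeping needed to verify which addable/removable node is the \emph{good} one; everything else is a routine induction. In the interest of space I expect to cite \cite[\S2.2]{LLT} for the delicate local computation rather than redo it in full.
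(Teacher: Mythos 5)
The paper does not actually prove Proposition \ref{reg}: it is stated with a pointer to \cite[\S2.2]{LLT} (the result goes back to Misra--Miwa and Lascoux--Leclerc--Thibon), so there is no in-paper argument to compare against. Your outline is the standard proof of that cited result and the overall strategy (double induction on $n$, showing the connected component of $\emptyset$ is exactly $\operatorname{Reg}_e$) is sound, but two of the combinatorial details are stated incorrectly as written. First, in the forward direction the obstruction is not that the $A$ of $\gamma$ ``would be matched by an $R$ to its right'': the reduction deletes factors $RA$, so an $A$ can only be cancelled by an $R$ to its \emph{left}. The actual point is that if adding $\gamma=(a+e-1,p)$ completes a block of $e$ parts equal to $p$ in rows $a,\dots,a+e-1$ (with $a$ minimal), then $(a,p+1)$ is an addable node of the \emph{same} residue lying strictly to the right of $\gamma$ in $w_i(\lambda)$ with no letters in between; hence either $\gamma$ is cancelled or it is not the rightmost surviving $A$, and in either case it is not good. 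Second, in the converse direction, the removable node at the bottom of the first maximal block of equal parts need not itself be the good removable node for its residue $i$; what one actually checks is that it is the \emph{last} letter of $w_i(\lambda)$ and is an $R$, so the reduced word ends in $R$ and hence $\widetilde e_{i+e\mathbb{Z},e}\cdot\lambda\neq 0$. One then needs the separate (symmetric) argument that the good removable $i$-node, whichever node it turns out to be, cannot create an $e$-fold repetition when removed --- because otherwise there would be an uncancellable $R$ of the same residue immediately to its left in $w_i(\lambda)$, contradicting that it is the leftmost surviving $R$. With those two corrections your induction closes; since you in any case propose to cite \cite{LLT} for the local computation, exactly as the paper does, the end result is equivalent to the paper's treatment.
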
 
 
In other words, the vertices in the connected component of the $\widehat{\mathfrak{sl}}_e$-crystal graph containing the empty partition are exactly the $e$-regular partitions. We thus have a subgraph of this crystal graph with vertices all these  $e$-regular partitions.

Recall the definition of the Mullineux involution given in the introduction. 
The following result permits to compute it in a purely combinatorial way thanks to the above results. 
\begin{Th}[Kleshchev]
 Let $\lambda$ be a $e$-regular partition. 
Then, there exists $(i_1,\ldots,i_n)\in \mathbb{Z}^n$ such that:
$$\widetilde{f}_{i_1+e\mathbb{Z},e} \ldots \widetilde{f}_{i_n+e\mathbb{Z},e}. \emptyset =\lambda.$$
Then, there exists an $e$-regular partition  $\mu$ such that:
$$\widetilde{f}_{-i_1+e\mathbb{Z}e} \ldots \widetilde{f}_{-i_n+e\mathbb{Z},e}. \emptyset =\mu.$$
Moreover, we have $m_e (\lambda)=\mu$ where $m_e$ is the Mullineux involution defined in the introduction.
\end{Th}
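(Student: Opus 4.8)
The plan is to reduce the statement to a known homological/categorical fact about the sign-twist automorphism $\sharp$ and then translate it into the crystal language via Kleshchev's branching rules. First I would recall that, by Ariki's theorem (or the Lascoux--Leclerc--Thibon philosophy as used in \cite{LLT}), the Kashiwara crystal operators $\widetilde{e}_{i,e},\widetilde{f}_{i,e}$ on the Fock space categorify the $i$-restriction and $i$-induction functors on the module category of the tower $(\mathcal{H}_n(\eta))_{n\ge 0}$: the good addable/removable $i$-nodes describe the socle of $i$-induction/restriction, and adding a good addable $i$-node to $\lambda$ corresponds to $D^{\mu}=\operatorname{soc}(\operatorname{Ind}_i D^{\lambda})$. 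This is exactly the content that lets one read a path $\widetilde{f}_{i_1,e}\cdots\widetilde{f}_{i_n,e}\cdot\emptyset=\lambda$ as a chain of modules built up by $i$-induction from the trivial module of $\mathcal{H}_0$.

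The key step is to understand how the automorphism $\sharp$ interacts with these induction/restriction functors. I would show that twisting by $\sharp$ intertwines $i$-induction with $(-i)$-induction, i.e.\ there is an isomorphism of functors $(\operatorname{Ind}_i(-))^{\sharp}\cong \operatorname{Ind}_{-i}((-)^{\sharp})$, and similarly for restriction. This follows from tracking the action of $T_j^{\sharp}=-\eta T_j$ on the relevant eigenspaces: the Jucys--Murphy-type elements whose eigenvalues pin down the residue $i$ get their eigenvalues sent to the eigenvalues corresponding to residue $-i$ under $\sharp$ (because $\sharp$ negates and rescales the $T_j$, which on the level of the affine Hecke algebra corresponds to the automorphism inverting the polynomial generators). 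Granting this, one proceeds by induction on $n$: if $\lambda=\widetilde{f}_{i_n,e}\cdot\nu$ with $D^{\lambda}=\operatorname{soc}(\operatorname{Ind}_{i_n}D^{\nu})$, then $(D^{\lambda})^{\sharp}=\operatorname{soc}(\operatorname{Ind}_{i_n}D^{\nu})^{\sharp}=\operatorname{soc}(\operatorname{Ind}_{-i_n}(D^{\nu})^{\sharp})=\operatorname{soc}(\operatorname{Ind}_{-i_n}D^{m_e(\nu)})=\widetilde{f}_{-i_n,e}\cdot m_e(\nu)$, using the inductive hypothesis $(D^{\nu})^{\sharp}\cong D^{m_e(\nu)}$ together with Proposition \ref{reg} (which guarantees $m_e(\nu)$ is $e$-regular and hence that $\widetilde{f}_{-i_n,e}$ is applied to a vertex in the $e$-regular component). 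Iterating from $\nu=\emptyset$ down to $\emptyset$ gives $\mu=\widetilde{f}_{-i_1,e}\cdots\widetilde{f}_{-i_n,e}\cdot\emptyset$ and $D^{\mu}\cong(D^{\lambda})^{\sharp}$, i.e.\ $\mu=m_e(\lambda)$ by the definition of the Mullineux involution. The well-definedness of $\mu$ — independence of the chosen sequence $(i_1,\ldots,i_n)$ — is automatic, since $\mu$ is characterized module-theoretically as the label of $(D^{\lambda})^{\sharp}$.

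I expect the main obstacle to be establishing the functor isomorphism $(\operatorname{Ind}_i)^{\sharp}\cong\operatorname{Ind}_{-i}\circ(\ )^{\sharp}$ carefully, i.e.\ checking that $\sharp$ really does swap the $i$- and $(-i)$-generalized eigenspaces of the commuting family used to define the block decomposition (the element $X_1=T_1\cdots T_{n-1}T_{n-1}\cdots T_1$ and its conjugates, whose eigenvalues are powers of $\eta$). One has to verify that $\sharp$ sends $X_k$ to a scalar multiple of $X_k^{-1}$ (or an analogous element) so that the eigenvalue $\eta^{i}$ is carried to $\eta^{-i}$; this is where the precise form $T_j^{\sharp}=-\eta T_j$ and the quadratic relation $(T_j-\eta)(T_j+1)=0$ must be used. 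Everything else — the induction on $n$, the passage between socles of induced modules and crystal operators, the base case — is formal once the LLT/Ariki categorification and this compatibility are in hand. I would remark that for $\eta=1$ (the group algebra case in characteristic $e$) this recovers Kleshchev's original modular-branching proof, and that the argument here is the natural Hecke-algebra generalization following \cite{Br}.
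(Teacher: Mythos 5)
The paper offers no proof of this statement: it is quoted as a theorem of Kleshchev, with the references \cite{K} (symmetric groups) and \cite{Br} (Hecke algebras) carrying the actual content. So there is no in-paper argument to compare against, and your proposal should be judged as a reconstruction of the standard proof — which it essentially is. The two inputs you isolate are exactly the right ones: (i) the modular branching rules, i.e.\ that $\operatorname{soc}(\operatorname{Ind}_i D^{\nu})\cong D^{\widetilde{f}_i\nu}$ and that good addable/removable $i$-nodes govern these socles, and (ii) the intertwining $(\operatorname{Ind}_i(-))^{\sharp}\cong\operatorname{Ind}_{-i}((-)^{\sharp})$, after which the induction on $n$ and the well-definedness of $\mu$ are formal, as you say. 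Be aware, though, that (i) is where essentially all of the difficulty of the theorem lives — it is the main theorem of Kleshchev's series of papers and of \cite{Br} — so your "proof" is really a reduction of the statement to the branching rules plus a residue computation, not a self-contained argument; that is acceptable for a cited classical theorem but should be stated as such. Two smaller points. First, your induction peels off $\widetilde{f}_{i_n}$, but in the composition $\widetilde{f}_{i_1}\cdots\widetilde{f}_{i_n}\cdot\emptyset$ the operator applied last is $\widetilde{f}_{i_1}$, so the inductive step should remove $i_1$ (harmless, but fix the indices). Second, when you carry out the eigenvalue check you flag as the main obstacle, you will find that the map $T_j\mapsto-\eta T_j$ as literally written in the paper does not preserve the quadratic relation $(T_j-\eta)(T_j+1)=0$ unless $\eta^2=1$; the automorphism actually intended is $T_j^{\sharp}=-\eta T_j^{-1}$, which has eigenvalues $\{\eta,-1\}$ again and sends the Jucys--Murphy element $L_k$ to (a normalization of) $L_k^{-1}$, carrying the eigenvalue $\eta^{i}$ to $\eta^{-i}$ as your argument requires. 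With that correction your residue computation goes through and the proposed proof is the standard one.
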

If $\lambda$ is a partition, every node of its Young diagram has an associated hook, defined as  the set of nodes directly below or to its right (including itself). A partition is called 
 an {\it $e$-core} if it has no hook with $k.e$ nodes for every $k\in \mathbb{N}$.  Of course, if $e$ is sufficiently large comparing to $n$ ($e>n$), every partition  of $n$ is an $e$-core.  
If $\lambda$ is an $e$-core, it is already contained in Mullineux's original paper \cite{Mu} that  $m_e (\lambda)$ is the conjugate partition of $\lambda$ (defined as the partition obtained  by interchanging rows and columns in the Young
diagram of $\lambda$)

\begin{exa}
Let $e=3$ and let $\lambda=(5,2,1,1)$. This is a $3$-regular partition. Then we have:
 $$\widetilde{f}^2_{0+3\mathbb{Z},3}\widetilde{f}^2_{1+3\mathbb{Z},3}\widetilde{f}_{0+3\mathbb{Z},3}\widetilde{f}^2_{2+3\mathbb{Z},3}\widetilde{f}_{1+3\mathbb{Z},3} \widetilde{f}_{0+3\mathbb{Z},3}\emptyset=\lambda .$$
 We get
 $$\widetilde{f}^2_{0+3\mathbb{Z},3}\widetilde{f}^2_{2+3\mathbb{Z},3}\widetilde{f}_{0+3\mathbb{Z},3}\widetilde{f}^2_{1+3\mathbb{Z},3}\widetilde{f}_{2+3\mathbb{Z},3} \widetilde{f}_{0+3\mathbb{Z},3}\emptyset=(4,2,2,1) .$$
 and thus  $m_3 (5,2,1,1)=(4,2,2,1)$.  If $e=6$ then $\lambda=(5,2,1,1)$ is a $6$-core and we have:
 $$\widetilde{f}_{0+6\mathbb{Z},6}\widetilde{f}_{3+6\mathbb{Z},6}\widetilde{f}^2_{4+6\mathbb{Z},6}\widetilde{f}_{3+6\mathbb{Z},6}\widetilde{f}_{2+6\mathbb{Z},6}\widetilde{f}_{1+6\mathbb{Z},6}\widetilde{f}_{5+6\mathbb{Z},6} \widetilde{f}_{0+6\mathbb{Z},6}\emptyset=\lambda .$$
We obtain
  $$\widetilde{f}_{0+6\mathbb{Z},6}\widetilde{f}_{3+6\mathbb{Z},6}\widetilde{f}^2_{2+6\mathbb{Z},6}\widetilde{f}_{3+6\mathbb{Z},6}\widetilde{f}_{4+6\mathbb{Z},6}\widetilde{f}_{5+6\mathbb{Z},6}\widetilde{f}_{1+6\mathbb{Z},6} \widetilde{f}_{0+6\mathbb{Z},6}\emptyset=(4,2,1,1,1),$$
  which is the conjugate partition of $\lambda$, as expected.
\end{exa}

In the following, we will study another way to compute this map without any use of the crystal and the Kashiwara operators. 

\section{Crystal isomorphisms for bipartitions}
In this section, we quickly summarize the needed results to expose our algorithm. These results mainly concern certain expansions of the above discussion to the case of bipartitions.

\subsection{Level $2$ Fock space}
From now we fix  a {\it  bicharge}, that is a couple ${\bf s}=(s_1,s_2)\in \mathbb{Z}^2$. Let us denote by $\Pi^2 (n)$ the set of pairs of partitions ({\it bipartitions}) $(\lambda^1,\lambda^2)$ such that $|\lambda^1|+|\lambda^2|=n$.
One can define the level $2$-Fock space as the $\mathbb{C}$-vector space  with basis indexed by all the elements of $\Pi^2 (n)$
 for $n\in \mathbb{Z}_{\geq 0}$. There is also a notion of crystal for this $2$-Fock space  with similar notions of Kashiwara operators 
  $\widetilde{f}_{i+e\mathbb{Z},e}^{\bf s}$ and   $\widetilde{e}_{i+e\mathbb{Z},e}^{\bf s}$. Importantly, the action of these operators  on each bipartition really depends on the choice of ${\bf s}$.

    To each $\ulambda:=(\lambda^1,\lambda^2)\in \Pi^2 (n)$ is associated  its {\it   Young diagram}:
$$[\ulambda]=\{(a,b,c)\ |  \ a\geq 1,\ c\in \{1,2\}, 1\leq b\leq \lambda_a^c\}.$$
We define the {\it content} of a node  $\gamma=(a,b,c)\in [\ulambda]$ as follows: 
$$\text{cont}(\gamma)=b-a+s_c,$$
 and the residue $\mathrm{res}(\gamma )$ is by definition the content of the node taken modulo $e$. 
 We will say that $\gamma $ is an $i+e\mathbb{Z}$-node of ${\boldsymbol{\lambda}}$ when $%
\mathrm{res}(\gamma )\equiv i +e\mathbb{Z}$ (we will sometimes simply called it an $i$-node). Finally, we say that $\gamma $
is {\it removable} when $\gamma =(a,b,c)\in [{\boldsymbol{\lambda}}]$ and $[{%
\boldsymbol{\lambda}}]\backslash \{\gamma \}$ is the Young diagram of  a bipartition. Similarly, $%
\gamma $ is {\it addable} when $\gamma =(a,b,c)\notin [{\boldsymbol{\lambda}}]$ and $%
[{\boldsymbol{\lambda}}]\cup \{\gamma \}$ is the Young diagram of  a bipartition.

 Let $\gamma$, $\gamma'$ be two removable or addable $i$-nodes of  $\ulambda$.  We  
denote
$$\gamma\prec_{\bf s}\gamma' \qquad 
\stackrel{\text{def}}{\Longleftrightarrow}\qquad \left\{\begin{array}{ll}
\mbox{either} & b-a+s_c<b'-a'+s_{c'},\\
\mbox{or} & b-a+s_c=b'-a'+s_{c'} \text{ and }c>c'.\end{array}\right.$$

For $\ulambda$ a bipartition and $i\in \mathbb{Z}/e\mathbb{Z}$, we can consider its set of addable and removable $i$-nodes.  Let $w^{(e,{\bf s})}_{i}(\ulambda)$ be the word obtained first by writing the
addable and removable $i$-nodes of ${\boldsymbol{\lambda}}$ in {increasing}
order with respect to $\prec _{{\mathbf{s}}}$,  
next by encoding each addable $i$-node by the letter $A$ and each removable $%
i$-node by the letter $R$.\ Write $\widetilde{w}^{(e,{\bf s})}_{i}(\ulambda)=A^{p}R^{q}$ for the
word derived from $w^{(e,{\bf s})}_{i}(\ulambda)$ by deleting as many of the factors $RA$ as
possible. In the following, we will sometimes write 
   $\widetilde{w}_{i}(\ulambda)$ and $w_i (\ulambda)$ instead of    $\widetilde{w}^{(e,{\bf s})}_{i}(\ulambda)$ and $w^{(e,{\bf s})}_{i}(\ulambda)$ if there is no possible confusion.

If $p>0,$ let $\gamma $ be the rightmost addable $i$-node in $%
\widetilde{w}_{i}$. The node $%
\gamma $ is called the {\it good addable $i$-node}. If $r>0$, the leftmost removable $i$-node in 
 $\widetilde{w}_{i}$ is called the {\it good removable $i$-node}. The definition of the Kashiwara 
  operators 
  $\widetilde{f}_{i+e\mathbb{Z},e}^{\bf s}$ and   $\widetilde{e}_{i+e\mathbb{Z},e}^{\bf s}$ follows then exactly as in \S \ref{l1}.
  In the same spirit as in the above discussion, one can also define a certain subset of bipartitions $\Phi^{({\bf s},e)} (n)$:
\begin{Def} We say that $(\lambda^1,\lambda^2)$ is an {\it Uglov bipartition} associated with ${\bf s}\in \mathbb{Z}^2$ if there exist 
$(i_1,\ldots,i_n)\in \mathbb{Z}^n$  such that:
$$\widetilde{f}^{\bf s}_{i_1+e\mathbb{Z},e} \ldots \widetilde{f}^{\bf s}_{i_n+e\mathbb{Z},e}. (\emptyset,\emptyset) =(\lambda^1,\lambda^2).$$
We denote by $\Phi_{(e,{\bf s })}$ the set of Uglov bipartitions and by $\Phi_{(e,{\bf s })}(n)$ the set   $\Phi_{(e,{\bf s })}\cap \Pi^2 (n)$.
\end{Def}

We make the three important   following remarks. 
\begin{Rem}\label{three}
\begin{enumerate}
\item Assume that $k\in \mathbb{Z}$ then there is a unique bijection:
$$\psi_{(e,(s_1,s_2+ke))} : \Phi_{(e,(s_1,s_2+ke))} \to  \Phi_{(e,(s_1,s_2+(k+1)e))},$$
preserving the rank of bipartitions and commuting with the Kashiwara operators, that is, for all $i\in \mathbb{Z}$ and $\lambda \in \Phi_{(e,(s_1,s_2+ke))}$, we have
$$\psi_{(e,(s_1,s_2+ke))} (\widetilde{f}^{(s_1,s_2+ke)}_{i+e\mathbb{Z},e} .\lambda)= \widetilde{f}^{(s_1,s_2+(k+1)e)}_{i+e\mathbb{Z},e} . \psi_{(e,(s_1,s_2+ke))}( \lambda),$$
and 
$$\psi_{(e,(s_1,s_2+ke))} (\widetilde{e}^{(s_1,s_2+ke)}_{i+e\mathbb{Z},e} .\lambda)= \widetilde{e}^{(s_1,s_2+(k+1)e)}_{i+e\mathbb{Z},e} . \psi_{(e,(s_1,s_2+ke))}( \lambda).$$
This bijection may be computed thanks  to a purely combinatorial algorithm given in section \S \ref{comb}. This map is called a {\it crystal isomorphism}. 
\item By \cite[\S 6.2.16]{GJ}, in the case where $|s_2-s_1|>n-1-e$, the bijection
$\psi_{(e,(s_1,s_2))}$ restricted to $\Phi_{(e,(s_1,s_2))} (n)$
 is always the identity.  We say that $(s_1,s_2)$ is {\it very dominant} (comparing to $n$). 
  This implies in particular that 
as soon as  $|s_2-s_1|>n-1-e$, the set   $\Phi_{(e,{\bf s })} (n)$ only depends on the congruence class of $(s_1,s_2)$  modulo $e$ (and not on $k$).
Similarly, the action of the Kashiwara operators on the bipartitions  of rank less than $n$ does not depend on $k$ if the above condition is satisfied. 
 The set is then called the set of Kleshchev  bipartitions. The set of Kleshchev bipartitions of rank $n$  will be denoted  by $\Phi_{(e,{\bf s })}^K(n)$
  and  we denote $\Phi_{(e,{\bf s })}^K:=\sqcup_{n\geq 0} \Phi_{(e,{\bf s })}^K(n)$
 \item One can  define a bijection:
  $$\widetilde{\psi}_{(e,(s_1,s_2))}: \Phi_{(e,(s_1,s_2))}  \to \Phi_{(e,{\bf s })}^K,$$
  as follows. Let $n\in \mathbb{Z}_{\geq 0}$ and let $\ulambda=(\lambda^1,\lambda^2)\in \Phi_{(e,(s_1,s_2))}$. 
 Assume  that  $k\in \mathbb{Z}_{>0}$ is such that a  $|s_2+ke-s_1|>n-1-e$, then we define:
 $$\widetilde{\psi}_{(e,(s_1,s_2))} (\lambda^1,\lambda^2):=\psi_{(e,(s_1,s_2+(k-1)e))}\circ \ldots \circ \psi_{(e,(s_1,s_2+e))}\circ \psi_{(e,(s_1,s_2))}(\lambda^1,\lambda^2).$$
 Due to the above remark, this bijection does not depend on $k$. 
\end{enumerate}
\end{Rem}

\subsection{Mullineux map}

There exists a Mullineux type map in the case of bipartitions.   Let ${\bf s}=(s_1,s_2)\in \mathbb{Z}^2$ and let 
$-{\bf s}:=(-s_1,-s_2)$. 
Our Mullineux map will be a map:
  $$\mathcal{M}_{(e,{\bf s})}: \Phi^K_{(e,{\bf s})} \to \Phi^K_{(e,-{\bf s})},$$
which is uniquely defined as follows. Let $\ulambda \in \Phi^K_{(e,{\bf s})} (n)$.
 Let $n\in \mathbb{Z}_{>0}$. Let ${\bf s}^1=(s_1,s_2+ke)$ be a very dominant bicharge such that 
 ${\bf s}^1\equiv {\bf s} +e\mathbb{Z}$ 
 and let   ${\bf s}^2$
be a very dominant bicharge such that  ${\bf s}^2\equiv -{\bf s} +e\mathbb{Z}$. There exists
 $(i_1,\ldots,i_n)\in \mathbb{Z}^n$ such that:
$$\widetilde{f}^{{\bf s}^1}_{i_1+e\mathbb{Z},e} \ldots \widetilde{f}^{{\bf s}^1}_{i_n+e\mathbb{Z},e}. \uemptyset =\ulambda.$$
Then it is shown in \cite[\S 2]{F2} that there exists $\umu \in \Phi_{(e,{\bf s }^2)}^K$ such that:
 $$\widetilde{f}^{{\bf s}^2}_{-i_1+e\mathbb{Z},e} \ldots \widetilde{f}^{{\bf s}^2}_{-i_n+e\mathbb{Z},e}. \uemptyset =\umu.$$
 We denote $\mathcal{M}_{(e,(s_1,s_2))} (\ulambda):=\umu$. 
 Then it is shown in \cite[Prop. 4.2]{JL0} that $\umu=(m_e (\lambda^1),m_e (\lambda^2))$.   
In the following section, we will use this property to deduce our conjectural algorithm.

\section{Explicit computations and a combinatorial property}\label{comb}
In this section, we explain how one can compute the above crystal isomorphisms. 
Our main conjecture is relied on a combinatorial conjectural property of  these maps. This property can in fact be settled in a completely 
general  framework.

\subsection{A combinatorial map}\label{algo} We recall here results from \cite{JL}. 
Let $e$ be a positive integer. For $r$ a positive integer, we denote by $\mathcal{P}^{r}$ the set of strictly increasing partitions in $r$ parts. 
 Let $m_1$ and $m_2$ be two integers such that $m_1 \leq m_2$. 

Let $(X_1,X_2)\in \mathcal{P}^{m_1} \times \mathcal{P}^{m_2}$.  Set 
 $$X_1=(a_{1},\ldots,a_{m_1}),\ X_2=(b_{1},\ldots,b_{m_2}).$$ 
 We define an injection 
$\varphi : X_1 \to X_2$ as follows.
\begin{itemize}
\item We set 
$$\varphi (a_1) =\operatorname{max} \{ b_j \ |\ j=1,\ldots,m_2, b_j \leq a_1\},$$
if it exists. Otherwise, we set 
$$\varphi (a_1)=\operatorname{max} \{ b_j \ |\ j=1,\ldots,m_2\}.$$
\item We repeat this procedure with $(a_2,\ldots,a_{m_1})$ and $X_2 \setminus \{\varphi (a_1)\}$ and thus associate 
to each element of $X_1$ a unique element in $X_2$. 
\end{itemize}
We now define a map:
 $$\Psi_{(e,(m_1,m_2))}  : \mathcal{P}^{m_1}\times \mathcal{P}^{m_2} \to  \mathcal{P}^{m_1}\times \mathcal{P}^{m_2+e},$$
with  $(Y_1,Y_2):=\Psi_{(e,(m_1,m_2))} (X_1,X_2)$:
$$Y_1=\{ \varphi (a_j) \ |\ j=1,\ldots,m_1\},$$
$$Y_2=\{ a_j+e\ |\ j=1,\ldots,m_1\} \cup\{ b_j +e \ |\ j=1,\ldots,m_2; b_j \notin Y_1 \} \cup \{0,1\ldots,e-1\},$$
where we reorder these two sets so that $Y_1 \in \mathcal{P}^{m_1}$ and $Y_2 \in \mathcal{P}^{m_2+e}$. 

\begin{Rem}
The map is bijective and  $(\Psi_{(e,(m_1,m_2))})^{-1}$ may be computed as follows. Assume that  $(Y_1,Y_2):=\Psi_{(e,(m_1,m_2))} (X_1,X_2)$  then take 
$Y_2'$ be the set $\{y-e \ |\ y\in Y_2\setminus \{0,1,\ldots,e-1\}\}$. Then 
we define an injection 
$\varphi ': Y_1 \to Y_2$ as follows.
\begin{itemize}
\item We set 
$$\varphi '(a_1) =\operatorname{min} \{ b_j \ |\ j=1,\ldots,m_2, b_j \geq a_1\},$$
if it exists. Otherwise, we set 
$$\varphi' (a_1)=\operatorname{min} \{ b_j \ |\ j=1,\ldots,m_2\}.$$
\item We repeat this procedure with $(a_2,\ldots,a_{m_1})$ and $X_2 \setminus \{\varphi '(a_1)\}$ and thus associate 
to each element of $Y_1$ a unique element in $Y_2'$.  Then we have
$$X_1=\{ \varphi' (a_j) \ |\ j=1,\ldots,m_1\},$$
$$X_2=\{ a_j\ |\ j=1,\ldots,m_1\} \cup\{ b_j \ |\ j=1,\ldots,m_2; b_j \notin X_1\}.$$
(after reordering the elements)
\end{itemize}

\end{Rem}
\begin{Rem}\label{ide}
In the case where $X_1 \subset X_2$, it follows from the above definition that 
$$\Psi_{(e,(m_1,m_2))}(X_1,X_2)=(X_1,X_2+e).$$
\end{Rem}

\subsection{Connection with crystal isomorphisms}
Assume that   ${\bf s}=(s_1,s_2)\in \mathbb{Z}^2$ and assume in addition that $s_1\leq s_2$ (we only need this case in the following but note that there is an analogue description of the crystal isomorphisms if $s_1\geq s_2$, see \cite{JL}). Let $\ulambda=(\lambda^1,\lambda^2)$ be a bipartition 
 of $n$ in  $\Phi_{(e,(s_1,s_2))}$.   One can assume that there exists an integer $m$ such that $\lambda^1=(\lambda^1_{1},\ldots,\lambda^1_{m+s_1})$ and $\lambda^2=(\lambda^2_{1},\ldots,\lambda^2_{m+s_2})$,
  adding parts equal to $0$ if necessary. For $i=1,\ldots,m+s_1$, we set
  $$\beta^1_j=\lambda^1_j-j+s_1+m.$$
  For $i=1,\ldots,m+s_2$, we set
  $$\beta^2_j=\lambda^2_j-j+s_2+m.$$
We then define   $X^{s_1,m}_1 (\lambda^1):=(\beta^1_{s_1+m},\ldots,\beta_2^1)\in \mathcal{P}^{m+s_1}$ and $X^{s_2,m}_2 (\lambda^2):=(\beta^2_{s_2+m},\ldots,\beta_1^2)\in \mathcal{P}^{m+s_2}$.

By \cite{JL}, we get:
\begin{Prop}
Keeping the above notations, 
We have $$\psi_{(e,(s_1,s_2))} (\lambda^1,\lambda^2)=(\mu^1,\mu^2),$$
where $(\mu^1,\mu^2)$ is the unique bipartition of $n$ such that 
$$\Psi_{(e,(s_1+m,s_2+m))}  (X^{s_1,m}_1 (\lambda^1),X^{s_2,m}_2 (\lambda^2))=(X^{s_1,m}_1 (\mu^1),X^{s_2+e,m}_2 (\mu^2)).$$
\end{Prop}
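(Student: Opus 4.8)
The plan is to unwind both sides of the claimed identity through the explicit description of the crystal isomorphism $\psi_{(e,(s_1,s_2))}$ in terms of abacus-type configurations (the $\beta$-numbers $X_1,X_2$) and compare it with the elementary combinatorial map $\Psi_{(e,(s_1+m,s_2+m))}$ defined in \S\ref{algo}. The key point is that the level $2$ Fock space with bicharge $(s_1,s_2)$ can be realised, via the $e$-quotient/abacus correspondence, as a level $1$ Fock space on the infinite wedge, where the Kashiwara operators $\widetilde{f}^{\bf s}_{i+e\Z,e}$ act by moving a single bead of the abacus by one step (this is the content of the results quoted from \cite{JL}). Under this identification, the crystal isomorphism $\psi_{(e,(s_1,s_2))}$ that changes $(s_1,s_2)$ into $(s_1,s_2+e)$ corresponds precisely to adding one runner of length $e$ to the second component's abacus while rearranging the beads so as to stay inside the image of the $\uemptyset$-component; this rearrangement is exactly what the injection $\varphi$ and the formulas for $(Y_1,Y_2)$ encode.

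Concretely, I would proceed as follows. First, fix $m$ large enough that $\lambda^1$ and $\lambda^2$ have the prescribed numbers of (possibly zero) parts, and record the $\beta$-number sets $X^{s_1,m}_1(\lambda^1)$ and $X^{s_2,m}_2(\lambda^2)$; note that these depend on $m$ only by a global shift, which is harmless because $\Psi$ is compatible with such shifts. Second, invoke the description from \cite{JL} of $\psi_{(e,(s_1,s_2))}$ on a single Kashiwara chain: since every Uglov bipartition is obtained from $\uemptyset$ by a sequence $\widetilde{f}^{\bf s}_{i_1}\cdots\widetilde{f}^{\bf s}_{i_n}$, and $\psi$ intertwines the operators for $(s_1,s_2)$ with those for $(s_1,s_2+e)$ (Remark \ref{three}(1)), it suffices to check that the abacus-level map implementing this intertwiner is $\Psi_{(e,(s_1+m,s_2+m))}$. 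Third, verify the base case: applied to $\uemptyset$ one has $X_1\subset X_2$, so by Remark \ref{ide} $\Psi$ sends $(X_1,X_2)$ to $(X_1,X_2+e)$, which is exactly the $\beta$-number data of $\uemptyset$ with bicharge $(s_1,s_2+e)$; hence $\psi_{(e,(s_1,s_2))}(\uemptyset)=\uemptyset$ is recovered. Fourth, and this is the heart of the argument, check the inductive step: if $\umu=\psi(\ulambda)$ has abacus data produced from that of $\ulambda$ by $\Psi$, then for each $i$ the good addable $i$-node of $\ulambda$ with respect to $\prec_{\bf s}$ is transported by $\varphi$ to the good addable $i$-node of $\umu$ with respect to $\prec_{(s_1,s_2+e)}$, so that $\Psi$ commutes with $\widetilde{f}^{\bf s}_{i+e\Z,e}$; by uniqueness in Remark \ref{three}(1) this forces the abacus implementation of $\psi$ to coincide with $\Psi$, and reading off partitions from the output $\beta$-sets gives $(X^{s_1,m}_1(\mu^1),X^{s_2+e,m}_2(\mu^2))$ as claimed.

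The main obstacle is the fourth step: tracking exactly how the ordering $\prec_{\bf s}$ on addable/removable $i$-nodes — which mixes the content $b-a+s_c$ with a tie-breaking rule on the component index $c$ — is expressed on the abacus, and verifying that the greedy pairing defining $\varphi$ (match $a_1$ to the largest $b_j\le a_1$, or to the largest $b_j$ overall if none) is precisely the rule that preserves "goodness" of nodes. One has to be careful because adding the block $\{0,1,\dots,e-1\}$ of new beads to $Y_2$ shifts all residues in a uniform way, and because beads that are "pushed" from $X_2$ into $Y_1$ versus those that merely shift by $e$ behave differently under the arrow structure; reconciling this with the word $\widetilde w^{(e,{\bf s})}_i$ and its reduction to $A^pR^q$ requires a somewhat delicate case analysis. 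Once this combinatorial lemma is in place — and it is essentially the content of \cite{JL}, which we are permitted to cite — the proposition follows by the induction on $n$ sketched above together with the uniqueness statement of Remark \ref{three}(1).
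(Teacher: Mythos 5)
The paper offers no proof of this proposition at all: it is quoted directly from \cite{JL}, so the only ``argument'' in the text is that citation. Your outline (base case on $\uemptyset$, intertwining of $\Psi$ with the Kashiwara operators, then uniqueness from Remark \ref{three}(1)) is the standard strategy and correctly isolates the one genuinely hard step --- the compatibility of the greedy pairing $\varphi$ with good addable $i$-nodes under $\prec_{\bf s}$ --- which you, exactly like the paper, defer to \cite{JL}; this is consistent with the paper's treatment.
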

\begin{Rem}\label{ide2}
In the case where $X^{s_1,m}_1 (\lambda^1) \subset X^{s_2,m}_2 (\lambda^2)$, by Remark \ref{ide}, we obtain:
$$\psi_{(e,(s_1,s_2))} (\lambda^1,\lambda^2)=(\lambda^1,\lambda^2).$$
\end{Rem}

\subsection{Computing the map $\widetilde{\psi}_{(e,(s_1,s_2))}$}\label{asyp}. Assume that $s_1\leq s_2$. 
 To compute 
 $\widetilde{\psi}_{(e,(s_1,s_2))}$, as explained in Remark \ref{three} $(3)$,  we have to fix $n\in \mathbb{Z}_{\geq 0}$ and compute 
 $\widetilde{\psi}_{(e,(s_1,s_2))}|_{\Phi_{(e,{\bf s })} (n)  }$. If 
   $k\in \mathbb{Z}_{>0}$  is  such that  $|s_2+ke-s_1|>n-1-e$,  we have  to compose $k$ crystal isomorphisms: 
 $$ \widetilde{\psi}_{(e,(s_1,s_2))}|_{\Phi_{(e,{\bf s })} (n)  } :=\psi_{(e,(s_1,s_2+(k-1)e))}\circ \cdots \circ \psi_{(e,(s_1,s_2+e))}\circ \psi_{(e,(s_1,s_2))}|_{\Phi_{(e,{\bf s })} (n) }$$
However, in most of the cases,  if we want to compute the image of a particular bipartition $\ulambda\in \Phi_{(e,{\bf s })} (n) $ under  $\widetilde{\psi}_{(e,(s_1,s_2))}$ 
one can be considerably more efficient thanks to the following remark. Let $\ulambda\in \Phi_{(e,{\bf s })} (n) $   and  $h:=\text{max}\{i\in \mathbb{Z}_{>0} \ |\ \lambda^2_i\neq 0\}+1$. 
Assume that 
\begin{equation}\label{simp} \lambda^1_1-1+s_1\leq s_2-h,\end{equation}
then we have for all relevant $m$, and for all $k\geq 0$ : $X^{s_1,m}_1 (\lambda^1)\subset X^{s_2,m}_2 (\lambda^2)$.
By Remarks \ref{ide} and \ref{ide2},  this implies that 
$$\psi_{(e,(s_1,s_2))} (\lambda^1,\lambda^2)=(\lambda^1,\lambda^2).$$
But now we also have $\lambda^1_1-1+s_1\leq s_2+e-h$ and thus we obtain 
$$\psi_{(e,(s_1,s_2+e))} (\lambda^1,\lambda^2)=(\lambda^1,\lambda^2).$$
By an immediate induction, we deduce that   for all $k\geq 0$ , we have:
$$\psi_{(e,(s_1,s_2+ke))} (\lambda^1,\lambda^2)=(\lambda^1,\lambda^2).$$
In this case, we thus simply have:
$$\widetilde{\psi}_{(e,(s_1,s_2))}(\lambda^1,\lambda^2)=(\lambda^1,\lambda^2).$$
Of course, a similar result holds for $(\widetilde{\psi}_{(e,(s_1,s_2))})^{-1}$ : if $(\lambda^1,\lambda^2)\in  \Phi^K_{(e,{\bf s})} $ satisfies the above property, then we have 
 for all $k\geq 0$ that $(\lambda^1,\lambda^2)\in   \Phi_{(e,(s_1,s_2+ke))}$  and 
 $(\psi_{(e,(s_1,s_2+ke))} )^{-1}(\lambda^1,\lambda^2)=(\lambda^1,\lambda^2)$. 

We end this section with our combinatorial conjecture 
\subsection{A combinatorial conjecture}
Our main conjecture is the following one:
\begin{conj}\label{conj}
Let  $X\in \mathcal{P}_{m}$ and let $k\in \mathbb{N}$, set 
$$(X_1,X_2)=\Psi_{(e,(0,ke))} \circ  \ldots \circ \Psi_{(e,(0,e))} \circ  \Psi_{(e,(0,0))} ( X,X)\in \mathcal{P}^{m} \times \mathcal{P}^{m+ke}.$$
Then if $k$ is odd, we have  $X_1 \subset X_2$. 
 \end{conj}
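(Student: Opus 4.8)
\emph{Reformulation and overall strategy.} The pair in the statement is produced by iterating the charge--raising map on the diagonal $(X,X)$. First a bookkeeping remark: the displayed composition consists of the $k+1$ maps $\Psi_{(e,(0,je))}$ for $j=0,1,\dots,k$, each of which raises the number of parts of the second beta-set by $e$. Hence when $k$ is odd an \emph{even} number of maps is applied, and the content of the statement is the following: writing $f:=\Psi$ for one elementary step and $(A_t,B_t):=f^{\,t}(X,X)$, one has $A_t\subset B_t$ whenever $t$ is even. The plan is to prove this by induction on $t$ in steps of two. The base case $t=0$ is the pair $(X,X)$, where $A_0=B_0=X$, so that $A_0\subset B_0$ holds trivially; this is exactly the inclusion to be propagated.

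\emph{The two halves of the inductive step.} Assume $A_{2j}\subset B_{2j}$. The passage $2j\to 2j+1$ is governed by Remark \ref{ide}: since $A_{2j}\subset B_{2j}$, the map $f$ fixes the first beta-set and only shifts the second, so that
$$A_{2j+1}=A_{2j},\qquad B_{2j+1}=(B_{2j}+e)\cup\{0,1,\dots,e-1\}.$$
Thus the intermediate (odd) pair has the very special shape $(A,C)$ with $C=(B+e)\sqcup L$, where $L:=\{0,1,\dots,e-1\}$, $B:=B_{2j}$ and $A:=A_{2j}\subset B$. The passage $2j+1\to 2j+2$ is one genuine application of $f$: one forms the greedy injection $\varphi\colon A\to C$ of \S\ref{algo}, puts $A_{2j+2}=\varphi(A)$, and builds $B_{2j+2}=(A+e)\cup\big((C\setminus\varphi(A))+e\big)\cup L$. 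The heart of the matter is therefore the claim: if $A\subset B$ and $C=(B+e)\cup L$, then $\varphi(A)\subset (A+e)\cup\big((C\setminus\varphi(A))+e\big)\cup L$. To attack it I would test each image $\varphi(a)$ against the block decomposition $C=L\sqcup(B+e)$. If $\varphi(a)\in L$ it lies in the target by construction. Otherwise $\varphi(a)=b^{*}+e$ with $b^{*}\in B$, and since $\varphi(a)\ge e$ membership in the target amounts to $b^{*}\in A$ or $b^{*}\in C\setminus\varphi(A)$; a cardinality count (the three pieces defining $B_{2j+2}$ are forced to be disjoint because $f$ lands in $\mathcal{P}^{m+(t+1)e}$) shows these two alternatives are exhaustive \emph{provided} the selected bead $b^{*}$ itself lies in $C$.

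\emph{The main obstacle.} This last proviso is where the difficulty concentrates, and it shows that the hypothesis $A\subset B$ alone is not enough: there exist pairs with $A\subset B$ for which the greedy injection selects a bead $b^{*}+e$ whose predecessor $b^{*}$ is neither in $A$ nor in $C$, and then inclusion fails one step later. What rescues the diagonal case is an extra regularity of $B_{2j}$ inherited from its origin — the second beta-set produced from $(X,X)$ always contains a sufficiently long initial segment and has no low gaps below the largest bead of $A_{2j}$ — which is precisely what forces $b^{*}\in C$. The real work is thus to isolate the correct invariant satisfied by the even-step pairs and to verify that $f^{2}$ preserves it; in view of Remark \ref{ide2}, this invariant is another face of the assertion that the even-step bipartitions already sit in canonical form for their charge.

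\emph{Why parity is the governing quantity.} I expect the cleanest way to organise this invariant, and to explain \emph{why} it is the parity of $t$ that matters, is to pass through Uglov's folding of the level-$2$ $\widehat{\mathfrak{sl}}_e$-Fock space onto the level-$1$ $\widehat{\mathfrak{sl}}_{2e}$-Fock space. Under it the diagonal $(\lambda,\lambda)$ corresponds to a partition symmetric under the involution exchanging the two families of runners, and one step of $f$ corresponds to an elementary shift that intertwines those two families, so that its square, but not itself, commutes with the symmetry; only after an even number of steps does one return to the symmetric (dominant) chamber in which $X_1\subset X_2$. Transporting this inclusion back through the folding would finish the proof, the identification of the folding-image of $\psi_{(e,(s_1,s_2))}$ being the technical heart and, I anticipate, the principal difficulty.
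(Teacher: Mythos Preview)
The statement is a \emph{conjecture}; the paper does not claim a proof in general and establishes only the case $k=1$. Your proposal is not a complete proof either, and you are candid about this: you set up a two-step induction, carry out one half cleanly, and then correctly identify the obstruction in the second half.

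Your framework does recover the paper's $k=1$ argument. At the first inductive pass ($2j=0$) one has $A=B=X$, and then your target set
\[
B_{2}=(A+e)\cup\bigl((C\setminus\varphi(A))+e\bigr)\cup L
\]
contains $(A+e)\cup L=(B+e)\cup L=C$, hence contains $\varphi(A)=A_{2}$. This is precisely the paper's observation for $k=1$: $Y_{1}=\varphi(X)\subset C$ while $Y_{2}\supset(X+e)\cup\{0,\dots,e-1\}=C$, so $Y_{1}\subset Y_{2}$. The point is that when $A=B$ the piece $(A+e)$ already covers all of $C\setminus L$, which is exactly what fails once $A\subsetneq B$.

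Beyond $k=1$ the paper offers nothing further (only numerical evidence up to rank $40$ and the remark that Fayers has settled $e=2$), so your diagnosis is accurate rather than a defect of your write-up: the inductive hypothesis $A_{2j}\subset B_{2j}$ is genuinely too weak to propagate, and some stronger invariant on the even-step pairs is needed. Your proposal to locate that invariant via the level-$2$/level-$1$ Uglov folding is plausible heuristics but is not carried out; the paper does not pursue this route either. In short, your sketch subsumes the only case the paper actually proves, correctly isolates the open difficulty, and neither you nor the paper closes the gap.
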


We prove the conjecture in the case $k=1$.  Note that if $X_1 \subset X_2$ then $\varphi$ is the identity. We thus have that 
 $$\Psi_{(e,(0,0))} (X,X)=(X,X+e \cup\{0,1,\ldots,e-1\}).$$
Now se set 
 $$\Psi_{(e,(0,e))} (X,X+e \cup\{0,1,\ldots,e-1\})=(Y_1,Y_2).$$
 From the above procedure, the elements of  $Y_1$ are some elements of $X+e \cup\{0,1,\ldots,e-1\} $  and $Y_2$ is given by 
  $\{0,1,\ldots,e-1\}$ together with all the elements of $X+e$ and other elements of $X+e \cup\{0,1,\ldots,e-1\} $  translated by $e$. We thus have 
   $Y_1 \subset Y_2$.

In the following, it will be convenient to write the image of an element $(X_1,X_2)\in \mathcal{P}_{m_1} \times \mathcal{P}_{m_2}$ under 
 a map $\Psi_{(e,{\bf s})}$ as $\left( \begin{array}{c}  Y_1 \\ Y_2 \end{array} \right)$ instead of $(Y_1,Y_2)$.  This is what we are going to do in the following example. Assume that 
$$X=\{0,3,5,6,10,12,18,20\},$$
and $e=3$. 
We check that 
$$\Psi_{(3,(0,0))} (X,X)=\left(
\begin{array}{cccccccccccc}
0&1&2&3&6&8&9&13&15&18&21&23\\
0&3&5&6&10&12&15&18&20\\
\end{array}\right)
$$
$$\Psi_{(3,(0,3))}\circ \Psi_{(3,(0,0))} (X,X)
=\left(
\begin{array}{ccccccccccccccc}
0&1&2&3&4&6&8&9&13&15&18&21&23&24&26 \\
0&2&3&6&8&9&13&15&18
\end{array}\right)$$
and the  set below  is included in the set above, as claimed by the conjecture. Then by applying $\Psi_{(3,(0,6))}$ we get 
$$\left(
\begin{array}{cccccccccccccccccc}
0&1&2&3&4&5&6&7&9&11&12&16&18&21&24&26&27&29 \\
0&2&3&6&8&9&13&15&18
\end{array}\right)$$
and the action of $\Psi_{(3,(0,9))}$ then gives 
$$\left(
\begin{array}{cccccccccccccccccc}
0&1&2&3&4&5&6&7&8&9&11&12&16&18&... \\
0&2&3&6&7&9&11&12&18
\end{array}\right)$$
which yet satisfies the inclusion property. 

Note that in the assumptions of the conjecture, we really need $k$ to be odd. In the case when $k$ is even, the assertion  is wrong as we can see in the above example. 

\begin{Rem}
This conjecture has been checked for all couples $(X,X)=(X^{0,m}(\lambda),X^{0,m}(\lambda))$ with $\lambda$ 
an arbitrary partition of rank $n$ with 
 $n\leq 40$ (and  $e$ arbitrary). A proof for the conjecture has  already been obtained by M.Fayers when $e=2$ \cite{mat}. \end{Rem}

\section{Conjectural consequences on crystal isomorphisms}

We first establish some elementary results concerning $e$-regular partitions and then explain our conjectural algorithm.

\begin{Prop}\label{2ereg}
Let $\lambda$ be an $e$-regular partition and consider a sequence $(i_1,\ldots,i_n)\in \mathbb{Z}^n$ such that:
$$\widetilde{f}_{i_1+\mathbb{Z}e,e} \ldots \widetilde{f}_{i_n+\mathbb{Z}e,e}. \emptyset =\lambda.$$
 Then  we have 
  $$(\widetilde{f}^{(0,0)}_{i_1+\mathbb{Z}e,e})^2 \ldots (\widetilde{f}^{(0,0)}_{i_n+\mathbb{Z}e,e})^2 (\emptyset,\emptyset)=(\lambda,\lambda),$$
and in particular we have $(\lambda,\lambda)\in \Phi_{(e,(0,0))}$.
\end{Prop}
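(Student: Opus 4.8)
The plan is to show that the level-$2$ Kashiwara operator $\widetilde{f}^{(0,0)}_{i+e\mathbb{Z},e}$ applied twice to a "doubled" bipartition $(\mu,\mu)$ produces the doubled bipartition $(\widetilde{f}_{i+e\mathbb{Z},e}\cdot\mu,\ \widetilde{f}_{i+e\mathbb{Z},e}\cdot\mu)$, and then conclude by an immediate induction on $n$, starting from $(\emptyset,\emptyset)$. So the heart of the matter is the single-step claim: for any partition $\mu$ (occurring in the crystal component of $\emptyset$, i.e.\ $e$-regular) and any residue $i$, if $\widetilde{f}_{i+e\mathbb{Z},e}\cdot\mu=\nu$ then $(\widetilde{f}^{(0,0)}_{i+e\mathbb{Z},e})^2\cdot(\mu,\mu)=(\nu,\nu)$.

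To prove this single-step claim I would analyze the word $w^{(e,(0,0))}_i(\mu,\mu)$ of addable/removable $i$-nodes of the bipartition $(\mu,\mu)$ and compare it with the word $w_i(\mu)$ for the single partition. With bicharge $(0,0)$ the content of a node $(a,b,c)$ is $b-a$, independent of the component $c$; so the addable/removable $i$-nodes of $(\mu,\mu)$ are exactly two interleaved copies of those of $\mu$, and the order $\prec_{(0,0)}$ breaks content-ties in favour of the node in component $2$ (since $c>c'$ means $\gamma\prec_{(0,0)}\gamma'$ when contents agree, so component-$1$ nodes come first among equal-content nodes — I will double-check the tie-break direction against the definition). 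The key point is that a node of $\mu$ in row $a$ and a node of $\mu$ in a different row $a'$ have different contents precisely when they would in the single-partition reading; and two nodes of the two copies that sit in the \emph{same} row of the respective $\mu$'s have equal content and are adjacent in the word. Concretely, each letter $A$ (resp.\ $R$) of $w_i(\mu)$ gets replaced by a block $AA$ (resp.\ $RR$) in $w_i(\mu,\mu)$, with the two copies possibly separated only by letters of strictly intermediate content — but there are none, since the two copies share the same content multiset. Hence $w_i(\mu,\mu)$ is obtained from $w_i(\mu)$ by doubling every letter. Cancelling $RA$ factors then shows $\widetilde{w}_i(\mu,\mu)$ is the doubled word of $\widetilde{w}_i(\mu)$: if $\widetilde{w}_i(\mu)=A^pR^q$ then $\widetilde{w}_i(\mu,\mu)=A^{2p}R^{2q}$. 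Therefore $(\mu,\mu)$ has a good addable $i$-node iff $\mu$ does, and in that case the good addable $i$-node of $(\mu,\mu)$ is the rightmost $A$, which lies in the copy of $\mu$ attached as component $2$ and is the good addable $i$-node of that $\mu$; adding it yields $(\mu,\nu)$ with $\nu=\widetilde{f}_{i+e\mathbb{Z},e}\cdot\mu$. Now re-run the same analysis on $(\mu,\nu)$: its $i$-word is the concatenation of the doubled word minus one $A$ in the right place, and one checks the second application of $\widetilde{f}^{(0,0)}_{i+e\mathbb{Z},e}$ targets the corresponding $A$ in the component-$1$ copy, giving $(\nu,\nu)$.

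The induction is then routine: $(\emptyset,\emptyset)$ is the doubled empty partition, and if $(\widetilde{f}^{(0,0)}_{i_2+e\mathbb{Z},e})^2\cdots(\widetilde{f}^{(0,0)}_{i_n+e\mathbb{Z},e})^2(\emptyset,\emptyset)=(\rho,\rho)$ with $\rho=\widetilde{f}_{i_2+e\mathbb{Z},e}\cdots\widetilde{f}_{i_n+e\mathbb{Z},e}\cdot\emptyset$, then applying $(\widetilde{f}^{(0,0)}_{i_1+e\mathbb{Z},e})^2$ and the single-step claim gives $(\lambda,\lambda)$. Membership $(\lambda,\lambda)\in\Phi_{(e,(0,0))}$ is immediate from the Definition of Uglov bipartitions, taking the sequence $(i_1,i_1,i_2,i_2,\ldots,i_n,i_n)$. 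I expect the main obstacle to be the bookkeeping in the single-step claim — precisely, verifying that after the first application of $\widetilde{f}^{(0,0)}_{i+e\mathbb{Z},e}$ the bipartition $(\mu,\nu)$ is still "close enough to doubled" that its good addable $i$-node is the matching node in the other copy; this requires checking that removing the component-$2$ good addable node does not disturb which $A$'s survive the $RA$-cancellation in the component-$1$ copy, which is where the equal-content interleaving has to be handled carefully (and where the exact convention for $\prec_{(0,0)}$ matters).
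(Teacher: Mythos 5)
Your proposal is correct and follows essentially the same route as the paper: induct on $n$, observe that with bicharge $(0,0)$ the $i$-word of $(\mu,\mu)$ is the letter-doubled $i$-word of $\mu$ (equal-content pairs adjacent, ordered by component), so the reduced word doubles to $A^{2p}R^{2q}$ and the two applications of $\widetilde{f}^{(0,0)}_{i+e\mathbb{Z},e}$ add the two copies of the good addable node of $\mu$, one in each component. The tie-break direction you flagged only decides which component receives its node first and is immaterial to the conclusion $(\nu,\nu)$, and the bookkeeping for the second application that you single out as the main obstacle does go through (changing the just-added $A$ to an $R$ leaves its equal-content partner as the rightmost surviving $A$); the paper's own proof asserts this step with no more detail than you give.
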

\begin{proof}
Let $\lambda\in \operatorname{Reg}_e (n)$. By Proposition \ref{reg}, there exists $(i_1,\ldots,i_n)\in \mathbb{Z}^n$ such that:
$$\widetilde{f}_{i_1+\mathbb{Z}e,e} \ldots \widetilde{f}_{i_n+\mathbb{Z}e,e}. \emptyset =\lambda.$$
We set 
$$\widetilde{\lambda}:=\widetilde{f}_{i_1+\mathbb{Z}e,e} \ldots \widetilde{f}_{i_{n-1}+\mathbb{Z}e,e}. \emptyset.$$
By induction, we have that 
  $$(\widetilde{f}^{(0,0)}_{i_1+\mathbb{Z}e,e})^2 \ldots (\widetilde{f}^{(0,0)}_{i_{n-1}+\mathbb{Z}e,e})^2 (\emptyset,\emptyset)=(\widetilde{\lambda},\widetilde{\lambda}).$$
Assume that  
   $$w_{i_n+e\mathbb{Z}} (\widetilde{\lambda})=Z_1,\ldots Z_m.$$
where  for all $i=1,\ldots,m$, $Z_i\in \{A,R\}$ correspond to a node $(a_i,b_i)$. Then we have:
    $$w_{i_n+e\mathbb{Z}} (\widetilde{\lambda},\widetilde{\lambda})=T_1,\ldots, T_{2m},$$
  where $T_{2i-1}=Z_i$ correspond to the node $(a_i,b_i,2)$ for $i=1,\ldots,m$ and  $T_{2i}=Z_i$ 
   for $i=1,\ldots,m$ corresponds to the node $(a_i,b_i,1)$. 
   It follows that if $(a_k,b_k)$ is a good addable $i_n+e\mathbb{Z}$-node  for $\widetilde{\lambda}$ then 
    $(a_i,b_i,2)$  is a good addable $i_n+e\mathbb{Z}$-node  for $(\widetilde{\lambda},\widetilde{\lambda})$ 
   and  $(a_i,b_i,1)$  is a good addable $i_n+e\mathbb{Z}$-node  for $(\widetilde{\lambda},\lambda)$. 
    We conclude that 
    $$(\widetilde{f}^{(0,0)}_{i_1+\mathbb{Z}e,e})^2 \ldots (\widetilde{f}^{(0,0)}_{i_n+\mathbb{Z}e,e})^2 (\emptyset,\emptyset)=(\lambda,\lambda)$$
    as required. 
\end{proof}
The following result comes from \cite[Lemma 3.2.12]{J} (see also \cite{Hu} for an similar result, but for a different realization of the Fock space).
\begin{Prop}\label{2ereg2}
Let $\lambda$ be an $e$-regular partition and let $(i_1,\ldots,i_n)\in \mathbb{Z}^n$ be such that:
$$\widetilde{f}_{i_1+\mathbb{Z}e,e} \ldots \widetilde{f}_{i_n+\mathbb{Z}e,e}. \emptyset =\lambda.$$
 Then  we have:
  $$\widetilde{f}^{(0,e)}_{i_1+2\mathbb{Z}e,2e} \widetilde{f}^{(0,e)}_{i_1+e+2\mathbb{Z}e,2e}   \ldots \widetilde{f}^{(0,e)}_{i_n+2\mathbb{Z}e,2e}  \widetilde{f}^{(0,e)}_{i_n+e+2\mathbb{Z}e,2e}   (\emptyset,\emptyset)=(\lambda,\lambda),$$
and in particular we have $(\lambda,\lambda)\in \Phi_{(2e,(0,e))}$.
\end{Prop}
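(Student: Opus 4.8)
The plan is to prove the displayed identity by induction on $n$, closely following the proof of Proposition~\ref{2ereg}. The case $n=0$ is trivial. For the inductive step set
$$\widetilde{\lambda}:=\widetilde{f}_{i_1+\mathbb{Z}e,e}\ldots\widetilde{f}_{i_{n-1}+\mathbb{Z}e,e}.\emptyset,$$
so that $\widetilde{\lambda}$ is $e$-regular of rank $n-1$ and $\lambda=\widetilde{f}_{i_n+\mathbb{Z}e,e}.\widetilde{\lambda}$; by the induction hypothesis the product of the first $2(n-1)$ operators sends $(\emptyset,\emptyset)$ to $(\widetilde{\lambda},\widetilde{\lambda})$, so it suffices to prove that
$$\widetilde{f}^{(0,e)}_{i_n+2\mathbb{Z}e,2e}\,\widetilde{f}^{(0,e)}_{i_n+e+2\mathbb{Z}e,2e}.(\widetilde{\lambda},\widetilde{\lambda})=(\lambda,\lambda).$$

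The technical core is a comparison of reading words. Write $r,r'$ for the two residues of $\mathbb{Z}/2e\mathbb{Z}$ above $i_n+e\mathbb{Z}$, with $r\equiv i_n$ and $r'\equiv i_n+e$ modulo $2e$. Since distinct addable or removable nodes of a partition lie on distinct diagonals, and of two of them the one in the row of larger index has the strictly smaller content, the word $w_{i_n}(\widetilde{\lambda})$ of Section~\ref{par} reads the addable and removable $i_n$-nodes of $\widetilde{\lambda}$ in order of increasing content. Now in $(\widetilde{\lambda},\widetilde{\lambda})$ each such node $\delta$ occurs twice: in the first component, where its $2e$-residue is its content modulo $2e$, hence $r$ or $r'$; and in the second component, where its $2e$-residue is shifted by $e$, hence the other of $r,r'$. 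Conversely every addable or removable $r$- or $r'$-node of $(\widetilde{\lambda},\widetilde{\lambda})$ arises in this way, as such a node of a given component comes from an addable or removable node of $\widetilde{\lambda}$ of content $\equiv i_n$ modulo $e$. So $\delta$ contributes exactly one letter, of the same type $A$ or $R$ as in $w_{i_n}(\widetilde{\lambda})$, to each of $w^{(2e,(0,e))}_{r}(\widetilde{\lambda},\widetilde{\lambda})$ and $w^{(2e,(0,e))}_{r'}(\widetilde{\lambda},\widetilde{\lambda})$. A short check on the $\prec_{(0,e)}$-keys of the relevant copies of two such nodes of contents $d<d'$ — these keys being $d$ or $d+e$, respectively $d'$ or $d'+e$ — shows that the copies of $\delta$ precede those of $\delta'$ in both words, the convention ``component $2$ first'' settling exactly the borderline case $d'=d+e$. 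Hence
$$w^{(2e,(0,e))}_{r}(\widetilde{\lambda},\widetilde{\lambda})=w^{(2e,(0,e))}_{r'}(\widetilde{\lambda},\widetilde{\lambda})=w_{i_n}(\widetilde{\lambda})$$
as words in $\{A,R\}$, and in each of them the $k$-th letter names the appropriate copy of the node named by the $k$-th letter of $w_{i_n}(\widetilde{\lambda})$.

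Granting this, the inductive step is short. Let $\gamma$ be the good addable $i_n$-node of $\widetilde{\lambda}$, which exists as $\widetilde{f}_{i_n+\mathbb{Z}e,e}.\widetilde{\lambda}\neq 0$; it is recorded by the rightmost surviving $A$ after deleting factors $RA$ from $w_{i_n}(\widetilde{\lambda})$. By the word identity the good addable $r'$-node of $(\widetilde{\lambda},\widetilde{\lambda})$ is the copy of $\gamma$ in the component where $\gamma$ has $2e$-residue $r'$, so $\widetilde{f}^{(0,e)}_{i_n+e+2\mathbb{Z}e,2e}$ adds $\gamma$ to that component, turning it from $\widetilde{\lambda}$ into $\lambda$. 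Adding a cell only changes the addable and removable nodes of residues $r'$, $r'-1$, $r'+1$ in that component, and $r\notin\{r',r'-1,r'+1\}$ because $e>1$; the other component is untouched. Hence the residue-$r$ reading word of the resulting bipartition is still $w_{i_n}(\widetilde{\lambda})$, so $\widetilde{f}^{(0,e)}_{i_n+2\mathbb{Z}e,2e}$ adds the copy of $\gamma$ in the remaining component, and we reach $(\lambda,\lambda)$. The membership $(\lambda,\lambda)\in\Phi_{(2e,(0,e))}$ is then immediate from the definition.

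The step I expect to be the main obstacle is this word comparison: one must carefully match the two $2e$-reading words of the doubled bipartition $(\widetilde{\lambda},\widetilde{\lambda})$ with the single $e$-reading word of $\widetilde{\lambda}$, and in particular verify that the order $\prec_{(0,e)}$, together with its component-based tie-break, reproduces the row order used at level $1$. Alternatively, the statement is \cite[Lemma 3.2.12]{J} (see also \cite{Hu}) and could simply be invoked.
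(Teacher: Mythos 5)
Your proof is correct. Note first that the paper does not prove this proposition at all: it is imported as a citation of \cite[Lemma 3.2.12]{J} (with \cite{Hu} as a variant), so there is no internal argument to compare against. What you supply is a genuine self-contained proof, modelled on the paper's own proof of Proposition \ref{2ereg} but adapted to the quantum characteristic $2e$ and the bicharge $(0,e)$. The two points where the adaptation is nontrivial are exactly the ones you isolate, and you handle both correctly: (i) the identification of each of the two reading words $w^{(2e,(0,e))}_{r}$ and $w^{(2e,(0,e))}_{r'}$ of $(\widetilde{\lambda},\widetilde{\lambda})$ with $w_{i_n}(\widetilde{\lambda})$, where the only possible tie among the $\prec_{(0,e)}$-keys $\{d,d+e\}$ versus $\{d',d'+e\}$ is $d+e=d'$, and then the smaller node sits in component $2$ so the ``larger $c$ first'' convention orders the copies consistently with the level-one row order; and (ii) the non-interference of the two operator applications, since adding a node of $2e$-residue $r'$ only disturbs addable/removable nodes of residues $r',r'\pm 1$ in that component, and $r=r'+e\not\equiv r',r'\pm1 \pmod{2e}$ precisely because $e>1$. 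Compared with simply invoking \cite{J}, your argument costs a page of word-comparison but makes the proposition independent of the external reference and exhibits clearly why the doubling of residues $(i_n,i_n+e)$ plays the role that squaring the operators plays in Proposition \ref{2ereg}.
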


We now use the above proposition together with the  following result which  rephrases Conjecture \ref{conj} in terms of crystal isomorphisms.
 
 Our algorithm is now built in the following result which assumes Conjecture \ref{conj}. 
 \begin{Prop}\label{isop}
 Assume that Conjecture \ref{conj} is true then for all $e$-regular partitions $\lambda$ of $n$ and $k\in \mathbb{Z}_{>0}$, we have:
 $$\psi_{(2e,(0,(2k+1)e))}\circ  \cdots  \circ \psi_{(2e,(0,3e))}  \circ  \psi_{(2e,(0,e))}  (\lambda,\lambda)= \psi_{(e,(0,(2k+1)e))}  \circ \cdots \circ \psi_{(e,(0,e))}  \circ   \psi_{(e,(0,0))} (\lambda,\lambda).$$
 In particular, we have 
  $$ \widetilde{\psi}_{(2e,(0,e))} (\lambda,\lambda)= \widetilde{\psi}_{(e,(0,0))}  (\lambda,\lambda).$$
 \end{Prop}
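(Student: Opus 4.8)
The plan is to run the whole computation at the level of $\beta$-sets, using the combinatorial description of the crystal isomorphisms $\psi_{(e,(s_1,s_2))}$ by the maps $\Psi_{(e,(m_1,m_2))}$ recalled in \S\ref{comb}, which holds verbatim with $e$ replaced by $2e$. Both compositions make sense: $(\lambda,\lambda)\in\Phi_{(e,(0,0))}$ by Proposition \ref{2ereg}, and $(\lambda,\lambda)\in\Phi_{(2e,(0,e))}$ by Proposition \ref{2ereg2}. Fix $m$ large enough to accommodate all the (bounded rank) bipartitions that occur, put $X:=X^{0,m}(\lambda)$, so that the $\beta$-data of $(\lambda,\lambda)$ at bicharge $(0,0)$ is the pair $(X,X)$, and for $j\ge 0$ set
$$\Psi^{[j]}:=\Psi_{(e,(0,je))}\circ\cdots\circ\Psi_{(e,(0,e))}\circ\Psi_{(e,(0,0))}(X,X).$$
Since $X\subseteq X$, Remark \ref{ide} gives $\Psi^{[0]}=\Psi_{(e,(0,0))}(X,X)=\bigl(X,(X+e)\cup\{0,\dots,e-1\}\bigr)$, and one checks directly that this is the $\beta$-data $W$ of $(\lambda,\lambda)$ at bicharge $(0,e)$; in passing this re-proves $\psi_{(e,(0,0))}(\lambda,\lambda)=(\lambda,\lambda)$. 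Through the above description, the right-hand side of the Proposition is the bipartition whose $\beta$-data at bicharge $(0,(2k+2)e)$ is $\Psi^{[2k+1]}$, while the left-hand side is the bipartition whose $\beta$-data at bicharge $(0,(2k+3)e)$ is $\Psi_{(2e,(0,(2k+1)e))}\circ\cdots\circ\Psi_{(2e,(0,e))}(W)$.

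The crux is the following exact identity relating one step of $\Psi_{(2e,\,\cdot)}$ to two steps of $\Psi_{(e,\,\cdot)}$: \emph{if $(U_1,U_2):=\Psi_{(e,(0,s))}(V_1,V_2)$ satisfies $U_1\subseteq U_2$, then}
$$\Psi_{(e,(0,s+e))}\circ\Psi_{(e,(0,s))}(V_1,V_2)=\Psi_{(2e,(0,s))}(V_1,V_2).$$
This is a direct computation. The injection $\varphi$ from \S\ref{algo} used in the definition of $\Psi$ does not involve $e$, so both sides have first component $\varphi(V_1)$. For the second components, $\Psi_{(2e,(0,s))}$ produces $(V_1+2e)\cup\bigl((V_2\setminus\varphi(V_1))+2e\bigr)\cup\{0,\dots,2e-1\}$; on the other side $\Psi_{(e,(0,s))}$ produces $U_2=(V_1+e)\cup\bigl((V_2\setminus\varphi(V_1))+e\bigr)\cup\{0,\dots,e-1\}$, after which $U_1\subseteq U_2$ forces (Remark \ref{ide}) the second map to fix $U_1$ and send $U_2$ to $(U_2+e)\cup\{0,\dots,e-1\}$; since $\{0,\dots,e-1\}\sqcup\bigl(\{0,\dots,e-1\}+e\bigr)=\{0,\dots,2e-1\}$, this equals the previous set.

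Conjecture \ref{conj} asserts exactly that the two $\beta$-sets of $\Psi^{[j]}$ are nested whenever $j$ is odd. Hence, for each $\ell\ge 1$, the exact identity applies with $s=(2\ell-1)e$ and $(V_1,V_2)=\Psi^{[2\ell-2]}$ --- its hypothesis holds because $(U_1,U_2)=\Psi^{[2\ell-1]}$ and $2\ell-1$ is odd --- so $\Psi^{[2\ell]}=\Psi_{(2e,(0,(2\ell-1)e))}\bigl(\Psi^{[2\ell-2]}\bigr)$. Iterating from $\Psi^{[0]}=W$ for $\ell=1,\dots,k+1$ gives
$$\Psi^{[2k+2]}=\Psi_{(2e,(0,(2k+1)e))}\circ\cdots\circ\Psi_{(2e,(0,e))}(W),$$
which is the left-hand $\beta$-data. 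On the other hand $\Psi^{[2k+1]}$ has nested $\beta$-sets (as $2k+1$ is odd), so by Remark \ref{ide2} the isomorphism $\psi_{(e,(0,(2k+2)e))}$ fixes the right-hand bipartition, whence $\Psi^{[2k+2]}=\Psi_{(e,(0,(2k+2)e))}\bigl(\Psi^{[2k+1]}\bigr)$ is again $\beta$-data for that same bipartition. Comparing the two displays proves the asserted equality. The ``in particular'' part follows by taking $k$ large enough that $(0,(2k+2)e)$ is very dominant for $\widehat{\mathfrak{sl}}_e$ and $(0,(2k+3)e)$ is very dominant for $\widehat{\mathfrak{sl}}_{2e}$; then by Remark \ref{three} the two sides equal $\widetilde\psi_{(e,(0,0))}(\lambda,\lambda)$ and $\widetilde\psi_{(2e,(0,e))}(\lambda,\lambda)$ respectively.

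The step I expect to be most delicate is the bookkeeping: one must keep the first $\beta$-set with exactly $m$ parts at every stage, track how the second one grows under each application of $\Psi$, make sure the exact identity is invoked at matching indices in the two compositions, and verify that the two final $\beta$-data --- which naturally sit at bicharges differing by $e$ --- really describe one and the same bipartition. Conjecture \ref{conj} is of course the genuine input; all the rest is a careful unwinding of the $\Psi$-formalism, and it might be cleaner to isolate it as a single ``doubling'' identity for the maps $\Psi$.
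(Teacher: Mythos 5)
Your proof is correct and follows essentially the same route as the paper's: the Conjecture supplies nestedness of the $\beta$-sets at each odd stage, Remark \ref{ide} then makes the even-stage $\widehat{\mathfrak{sl}}_e$-isomorphism trivial, and the remaining $e$-step is identified with a single $2e$-step because the injection $\varphi$ does not involve $e$. The only difference is presentational: where the paper invokes the (somewhat informal) observation that ``the algorithm does not depend on $e$'' at the level of bipartitions, you make this precise as an explicit doubling identity on $\beta$-sets, which is a cleaner and more verifiable formulation of the same idea.
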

\begin{proof}
Let $\lambda$ be an $e$-regular partition of $n$.  By Prop. \ref{2ereg} and \ref{2ereg2}, we have that $(\lambda,\lambda)\in \Phi_{(2e,(0,e))} \cap \Phi_{(e,(0,0))}$. 
By Remark \ref{ide}, we have 
$$ \psi_{(e,(0,0))} (\lambda,\lambda)=(\lambda,\lambda)\in \Phi_{(e,(0,e))}. $$
We thus have $(\lambda,\lambda)\in \Phi_{(e,(0,e))}\cap  \Phi_{(2e,(0,e))}$. 
Now, the algorithm to compute the image of a bipartiton under the crystal isomorphism $ \psi_{(e,(s_1,s_2))}$  does not depend on $e$. 
 This implies that:
$$ \psi_{(e,(0,e))}  (\lambda,\lambda)= \psi_{(2e,(0,e))}  (\lambda,\lambda).$$
We then argue by induction. Assume that 
 $$(\mu^1,\mu^2):=\psi_{(2e,(0,(2k-1)e))}\circ  \cdots  \circ \psi_{(2e,(0,3e))}  \circ  \psi_{(2e,(0,e))}  (\lambda,\lambda)= \psi_{(e,(0,(2k-1)e))}  \circ \cdots \circ \psi_{(e,(0,e))}  \circ   \psi_{(e,(0,0))} (\lambda,\lambda).$$
We use Conjecture \ref{conj} and Remark \ref{ide} to deduce that 
 $$\psi_{(e,(0,2ke))}  (\mu^1,\mu^2)=(\mu^1,\mu^2).$$
 Again, the algorithm  to compute the crystal isomorphisms implies that:
$$\psi_{(2e,(0,(2k+1)e))} (\mu^1,\mu^2)=\psi_{(e,(0,(2k+1)e))} (\mu^1,\mu^2).$$
and we are done. 
\end{proof}

\begin{Rem}

Note that in fact to prove the above result, we only need to prove Conjecture \ref{conj}  in the case where $X \in  \mathcal{P}_{m}$ is such that there is no $i\in X$ such that 
$i,i+1,\ldots, i+e-1$ are in $X$ (this corresponds to the $\beta$-sets associated with $e$-regular partitions).  We note that Matt Fayers has  a proof of this conjecture in the case $e=2$  and for $2$-regular partitions \cite{mat}.

\end{Rem}

Assuming that the conjecture \ref{conj} is true, we will now be able to obtain our algorithm. To do this, we will use a remarkable property of the Mullineux map which is available when the bicharge is very dominant. We will thus use the map   $\widetilde{\psi}_{(e,{\bf s})}$ which ``take to the very dominant world'' (see Remark \ref{three} (3)).

\begin{Prop}
Assume that Conjecture \ref{conj} is satisfied. Let $\lambda$ be an $e$-regular partition and denote $(\mu^1,\mu^2):=\widetilde{\psi}_{(e,(0,0))}(\lambda,\lambda)$.  We have: $$\begin{array}{rcl}
 \widetilde{\psi}_{(e,(0,0))}(m_e (\lambda),m_e (\lambda))&=&
\widetilde{\psi}_{(2e,(0,e))}(m_e (\lambda),m_e (\lambda)) \\
&=&(m_{2e} (\mu^1),m_{2e} (\mu^2))\\
&=& (m_e (\mu^1),m_e (\mu^2)).  \end{array}$$
\end{Prop}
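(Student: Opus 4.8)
The strategy is to chain together three facts: Proposition \ref{isop} (which uses Conjecture \ref{conj}), the fact that $\widetilde\psi$ intertwines the Kashiwara operators, and the compatibility of the Mullineux map $\mathcal{M}_{(e,{\bf s})}$ with $\widetilde\psi$ under negation of the bicharge. First I would fix an $e$-regular partition $\lambda$ of $n$ and a sequence $(i_1,\dots,i_n)\in\mathbb{Z}^n$ with $\widetilde f_{i_1+e\mathbb{Z},e}\cdots\widetilde f_{i_n+e\mathbb{Z},e}.\emptyset=\lambda$; by Kleshchev's theorem, $\widetilde f_{-i_1+e\mathbb{Z},e}\cdots\widetilde f_{-i_n+e\mathbb{Z},e}.\emptyset=m_e(\lambda)$. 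By Proposition \ref{2ereg} (applied to both $\lambda$ and $m_e(\lambda)$), the ``doubled'' squared operators send $(\emptyset,\emptyset)$ to $(\lambda,\lambda)$ and to $(m_e(\lambda),m_e(\lambda))$ respectively, so both lie in $\Phi_{(e,(0,0))}$; similarly Proposition \ref{2ereg2} puts them in $\Phi_{(2e,(0,e))}$.

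The equality $\widetilde\psi_{(e,(0,0))}(m_e(\lambda),m_e(\lambda))=\widetilde\psi_{(2e,(0,e))}(m_e(\lambda),m_e(\lambda))$ is then an immediate instance of the ``In particular'' clause of Proposition \ref{isop}, applied to the $e$-regular partition $m_e(\lambda)$ in place of $\lambda$. For the second equality, I would set $(\mu^1,\mu^2)=\widetilde\psi_{(e,(0,0))}(\lambda,\lambda)$ and note that, since $\widetilde\psi_{(e,(0,0))}$ is a composition of crystal isomorphisms $\psi_{(e,(0,ke))}$, each of which commutes with the Kashiwara operators (Remark \ref{three}(1)), the whole map $\widetilde\psi_{(e,(0,0))}$ intertwines $\widetilde f^{(0,0)}$ with $\widetilde f^{\,{\bf s}^1}$ for the very dominant bicharge ${\bf s}^1$ reached at the end of the chain. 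Hence $(\mu^1,\mu^2)$ is obtained from $(\emptyset,\emptyset)$ by applying, in the very dominant chamber, the same word of squared operators; reorganising these squared operators into the ``interlaced'' form used in the definition of $\mathcal{M}_{(2e,\cdot)}$ (exactly as in Proposition \ref{2ereg2}), one sees that applying the operators with indices negated yields $\mathcal{M}_{(2e,{\bf s}^1)}(\mu^1,\mu^2)$. By the cited result \cite[Prop. 4.2]{JL0} (and \cite{F2}), this equals $(m_{2e}(\mu^1),m_{2e}(\mu^2))$. On the other hand, applying the negated word of squared operators to $(\emptyset,\emptyset)$ in the undominant chamber gives $(m_e(\lambda),m_e(\lambda))$ by Proposition \ref{2ereg} and Kleshchev's theorem, and transporting this along $\widetilde\psi$ (which intertwines the operators and is a bijection) identifies it with $\widetilde\psi_{(e,(0,0))}(m_e(\lambda),m_e(\lambda))$. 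Thus $\widetilde\psi_{(e,(0,0))}(m_e(\lambda),m_e(\lambda))=(m_{2e}(\mu^1),m_{2e}(\mu^2))$.

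The third equality, $(m_{2e}(\mu^1),m_{2e}(\mu^2))=(m_e(\mu^1),m_e(\mu^2))$, should follow because the bicharge ${\bf s}^1$ is very dominant and $(\mu^1,\mu^2)$ is a Kleshchev bipartition there: in the very dominant regime the two-row Mullineux map $\mathcal{M}$ acts componentwise as ordinary conjugation on the two partitions (as recalled after the Kleshchev theorem, $m_e$ is conjugation on $e$-cores, and very dominance forces the relevant diagrams to be $e$-cores at the scales in play), so both $m_{2e}$ and $m_e$ reduce to conjugation of $\mu^1$ and $\mu^2$. Concretely I would argue that $\mu^1$ and $\mu^2$, being the components of an Uglov bipartition for a very dominant charge of rank $n$, are $e$-cores (hence $a$-cores for every $a\le 2e$ large enough), and invoke Mullineux's original observation that $m_a$ is conjugation on $a$-cores.

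The main obstacle is the middle equality: making precise the claim that $\widetilde\psi_{(e,(0,0))}$ carries the Kleshchev-path data for $(\lambda,\lambda)$ in the home chamber to that for $(\mu^1,\mu^2)$ in the very dominant chamber, \emph{and} that negating all the crystal indices is compatible with this transport. This requires checking that the intertwining property of $\psi$ with $\widetilde f$ (Remark \ref{three}(1)) is stable under $i\mapsto -i$ in the precise bookkeeping of the $2e$-versus-$e$ index shifts appearing in Propositions \ref{2ereg} and \ref{2ereg2}, and that the very dominant bicharges chosen for $\lambda$ and for $m_e(\lambda)$ can be taken to be the same ${\bf s}^1$ up to the sign conventions in the definition of $\mathcal{M}_{(2e,\cdot)}$. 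I expect this to be a bookkeeping argument rather than a conceptual one, but it is where care is needed.
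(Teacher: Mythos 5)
Your overall architecture matches the paper's: the first equality is Proposition~\ref{isop} applied to the $e$-regular partition $m_e(\lambda)$, and the remaining equalities come from transporting Kleshchev's crystal description of the Mullineux map along the crystal isomorphisms $\widetilde\psi$. However, two of your steps do not work as written.

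For the middle equality, you propose to ``reorganise'' the squared operators $(\widetilde f_{i_j+e\mathbb{Z},e})^2$ into the interlaced operators $\widetilde f_{i_j+2e\mathbb{Z},2e}\,\widetilde f_{i_j+e+2e\mathbb{Z},2e}$ after transport to the very dominant chamber. These belong to two \emph{different} crystal structures ($\widehat{\mathfrak{sl}}_e$ versus $\widehat{\mathfrak{sl}}_{2e}$) on the level-$2$ Fock space, and there is no formal reorganisation between them: the assertion that the two transports of $(\lambda,\lambda)$ land on the same bipartition is exactly Proposition~\ref{isop} applied to $\lambda$ itself, i.e.\ exactly where the Conjecture enters, and you never invoke it for $\lambda$ (only for $m_e(\lambda)$). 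The correct route is: Proposition~\ref{isop} gives $\umu:=\widetilde\psi_{(e,(0,0))}(\lambda,\lambda)=\widetilde\psi_{(2e,(0,e))}(\lambda,\lambda)$; then, working entirely in the $2e$-crystal, Proposition~\ref{2ereg2} and the intertwining of Remark~\ref{three}(1) show that $\umu$ is reached from $\uemptyset$ in the very dominant $2e$-chamber by the transported interlaced word; the negated interlaced word is the interlaced word of $(-i_1,\dots,-i_n)$ (note $-(i_j+e)\equiv -i_j+e \pmod{2e}$), hence reaches $(m_e(\lambda),m_e(\lambda))$ at charge $(0,e)$ by Kleshchev's theorem and Proposition~\ref{2ereg2}; therefore $\widetilde\psi_{(2e,(0,e))}(m_e(\lambda),m_e(\lambda))=\mathcal{M}_{(2e,(0,e))}(\umu)=(m_{2e}(\mu^1),m_{2e}(\mu^2))$ by \cite[Prop.~4.2]{JL0}, and the middle equality follows from the first one.

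For the last equality, your argument that $\mu^1,\mu^2$ are $e$-cores, so that both $m_e$ and $m_{2e}$ reduce to conjugation, is false: the components of a Kleshchev bipartition for a very dominant charge are $e$-regular but by no means $e$-cores. The paper's own example has $\mu^2=(6,5,5,4,1,1)$ with $e=3$, $2e=6$, and $m_6(\mu^2)=(11,9,2)$, which is not the conjugate $(6,4,4,4,3,1)$. The identity $(m_{2e}(\mu^1),m_{2e}(\mu^2))=(m_e(\mu^1),m_e(\mu^2))$ is not proved pointwise; it is obtained by running the same transport argument in the $e$-crystal --- Proposition~\ref{2ereg} and the intertwining give $\widetilde\psi_{(e,(0,0))}(m_e(\lambda),m_e(\lambda))=\mathcal{M}_{(e,(0,0))}(\umu)=(m_e(\mu^1),m_e(\mu^2))$ --- and then comparing with the first two equalities.
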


\begin{proof}

Let $\lambda$ be an $e$-regular partition. There exists $(i_1,\dots,i_n)\in \mathbb{Z}^n$ such that:
$$\widetilde{f}_{i_1+\mathbb{Z}e,e} \ldots \widetilde{f}_{i_n+\mathbb{Z}e,e}. \emptyset =\lambda.$$
 By Proposition \ref{2ereg}, we have that $(\lambda,\lambda)\in \Phi_{(e,(0,0))}$
 and by Proposition \ref{2ereg2}, we have$(\lambda,\lambda)\in \Phi_{(2e,(e,0))}$. Now  By Proposition \ref{isop} , we have that 
$$\widetilde{\psi}_{(e,(0,0))}(\lambda,\lambda)=\widetilde{\psi}_{(2e,(0,e))}(\lambda,\lambda).$$
Set $\umu:=(\mu^1,\mu^2):=\widetilde{\psi}_{(e,(0,0))}(\lambda,\lambda)$. Then by Proposition \ref{2ereg2},  we have
$\mathcal{M}_{(e,(0,0))} (\umu)=(m_e (\mu^1),m_e (\mu^2))$ and
$\mathcal{M}_{(2e,(0,e))} (\umu)=(m_{2e} (\mu^1),m_{2e} (\mu^2))$.

On the other hand take $\lambda':=(m_e (\lambda),m_e (\lambda))$. If we argue exactly as above, we have 
$$\widetilde{\psi}_{(e,(0,0))}(m_e (\lambda),m_e (\lambda))=\widetilde{\psi}_{(2e,(0,e))}(m_e (\lambda),m_e (\lambda)).$$
By definition we have 
 $$(\widetilde{f}^{(0,0)}_{-i_1+\mathbb{Z}e})^2 \ldots (\widetilde{f}^{(0,0)}_{-i_n+\mathbb{Z}e})^2 (\emptyset,\emptyset)=(m_e(\lambda),m_e (\lambda)),$$
 and this thus implies that 
 $$\widetilde{\psi}_{(e,(0,0))}(m_e (\lambda),m_e (\lambda))=\mathcal{M}_{(e,(0,0))} (\umu),$$
 and thus the result follows. 
\end{proof}

\begin{abs}
The algorithm can now be stated as follows.

\begin{enumerate}
\item If $e$ is sufficiently large, we know the Mullineux image of any $e$-regular partition because then any $e$-regular partition is an $e$-core and thus its Mullineux image is its conjugate partition.
\item Assume that we know $m_{2e}$. Let $\lambda$ be an $e$-regular partition. We compute:
$$(\mu^1,\mu^2):=\widetilde{\psi}_{(2e,(0,e))}(\lambda,\lambda).$$
\item Then we compute:
 $$(\nu^1,\nu^2):=(\widetilde{\psi}_{(2e,(0,e))})^{-1}(m_{2e}(\mu^1),m_{2e}(\mu^2)).$$
 \item We must have 
 $$m_e (\lambda)=\nu^1=\nu^2.$$

\end{enumerate} 

 \end{abs}

\subsection{Example}
Take $e=3$ and the $3$-regular partition $\lambda=(6,5,2,2,1,1)$. This is a partition of rank $17$ and so the very dominant case is reached if $s_2-s_1>30$.  
To perform our algorithm, we must compute:
$$\widetilde{\psi}_{(2e,(0,0))} (\lambda,\lambda)= \psi_{(2e,(0,ke))} \circ  \cdots \circ \psi_{(2e,(0,3e))} \circ  \psi_{(2e,(0,e))} (\lambda,\lambda)$$
until we reach the "very dominant case". We consider the $\beta$-sets associated with the bipartition $(\lambda,\lambda)$ with respect to the bicharge $(0,3)$:
$$\left(
\begin{array}{ccccccccc}
0 & 1 & 2 & 4 & 5 & 7 & 8 & 12& 14 \\
1 & 2 & 4 & 5 & 9 & 11
\end{array}\right)$$
To compute   $ \psi_{(2e,(0,e))} (\lambda,\lambda)$, we need to apply the algorithm described in \S \ref{algo}. We obtain:
$$\left(
\begin{array}{ccccccccccccccc}
0 &1 & 2 & 3 & 4 & 5 & 6 & 7 & 8 & 11 & 12 & 15 & 17 & 18& 20 \\
1 & 2 & 4 & 5 & 7 & 8
\end{array}\right)$$
 The associated bipartition is $((3,3,2,2,1,1),(6,5,5,4,1,1))$.  In principle, we have to apply again the algorithm until the ``very dominant case'', but note that we are already in the case described in \S \ref{asyp} so 
$$\widetilde{\psi}_{(2e,(0,0))} (\lambda,\lambda)=(\mu^1,\mu^2)=((3,3,2,2,1,1),(6,5,5,4,1,1)$$
By induction, we know $m_6 (3,3,2,2,1,1)=(6,4,2)$ (because $(3,3,2,2,1,1)$ is a $6$-core) and $m_6 (6,5,5,4,1,1)=(11,9,2)$. 
So now we have to compute $(\widetilde{\psi}_{(6,(0,0))})^{-1}$ for $((6,4,2),(11,9,2))$ starting from the very dominant case. In fact, using Remark \ref{asyp} again, we see that
$((6,4,2),(11,9,2))$  is in $\Phi_{(6,(0,9))}$ and that 
$$(\widetilde{\psi}_{(6,(0,0))})^{-1} ((6,4,2),(11,9,2))=(\psi_{(6,(0,3))})^{-1}((6,4,2),(11,9,2))$$
To compute this latter expression, we use our (reversed) algorithm, 
we consider the following symbol : 
$$\left(
\begin{array}{cccccccccccc}
 0 & 1 & 2 &5 & 13 & 16 \\
2 & 5 & 8
\end{array}\right)$$
This gives 
$$\left(
\begin{array}{cccccccccccc}
 0 & 1 & 2 &5 & 8 & 16 \\
2 & 5 & 13
\end{array}\right)$$
We get $((11,4,2),(11,4,2))$ and one can check that we indeed have $m_e (\lambda)=(11,4,2)$.

\end{document}